\documentclass[11pt]{article}%
\usepackage{float} 
\usepackage{fullpage}
\usepackage{amsfonts}
\usepackage{amsmath}
\usepackage{amssymb}
\usepackage{amsbsy}
\usepackage{amsthm}
\usepackage{mathtools}
\usepackage{epsfig}
\usepackage{graphicx}
\usepackage{colordvi}
\usepackage{graphics}
\usepackage{color}
\usepackage{bm}
\usepackage{verbatim} 
\numberwithin{equation}{section}
\usepackage{tikz}
\usepackage{ulem}

\newtheorem{theorem}{Theorem}[section]
\newtheorem{thm}{Theorem}[section]

\newtheorem{lemma}{Lemma}[section]

\newtheorem{algorithm}[thm]{Algorithm}

\newtheorem{alg}[thm]{Algorithm}
\newtheorem{remark}[thm]{Remark}

\newcommand{\nr}[1]{\ensuremath{\left\|{#1} \right\|}}

\usepackage{algpseudocode}
\newcommand{\f}{\frac}

\newcommand{\be}{\begin{equation}}
\newcommand{\ee}{\end{equation}}
\newcommand{\bea}{\begin{eqnarray}}
\newcommand{\eea}{\end{eqnarray}}
\newcommand{\beas}{\begin{eqnarray*}}
	\newcommand{\eeas}{\end{eqnarray*}}

\newcommand{\vertiii}[1]{{\left\vert\kern-0.25ex\left\vert\kern-0.25ex\left\vert #1
		\right\vert\kern-0.25ex\right\vert\kern-0.25ex\right\vert}}

\newcommand{\normiii}[1]{{\left\vert\kern-0.25ex\left\vert\kern-0.25ex\left\vert #1
		\right\vert\kern-0.25ex\right\vert\kern-0.25ex\right\vert}}

\setlength{\intextsep}{10pt plus 2pt minus 2pt}
\usepackage{enumerate}


\begin{document}
\title{Analysis of the Picard-Newton iteration for the Navier-Stokes equations: global stability and quadratic convergence}

\author{Sara Pollock \thanks{\small
		Department of Mathematics, University of Florida, Gainesville, FL, 32611, s.pollock@ufl.edu; partially supported by NSF grant DMS-2011519}  
		\and 
	Leo G. Rebholz\thanks{\small
		School of Mathematical and Statistical Sciences, Clemson University, Clemson, SC, 29364, rebholz@clemson.edu; partially supported by NSF grant DMS-2011490.}
	\and		
	Xuemin Tu\thanks{\small
	Department of Mathematics, University of Kansas, Lawrence, KS, 66049, xuemin@ku.edu.}	
		\and
	Mengying Xiao\thanks{\small
		Department of Mathematics and Statistics, University of West Florida, Pensacola, FL, 32514, mxiao@uwf.edu.}	
		}

	\maketitle
	
	\begin{abstract}{
		We analyze and test a simple-to-implement two-step iteration for the incompressible Navier-Stokes equations that consists of first applying the Picard iteration and then applying the Newton iteration to the Picard output.  We prove that this composition of Picard and Newton converges quadratically, and our analysis (which covers both the unique solution and non-unique solution cases) also suggests that this solver has a larger convergence basin than usual Newton because of the improved stability properties of Picard-Newton over Newton.  Numerical tests show that Picard-Newton dramatically outperforms both the Picard and Newton iterations, especially as the Reynolds number increases.   We also consider enhancing the Picard step with Anderson acceleration (AA), and find that the AAPicard-Newton iteration has even better convergence properties on several benchmark test problems.	
		}
\end{abstract}

	\section{Introduction}
	
	The Navier-Stokes equations (NSE) are widely used to model incompressible Newtonian fluid flow, which has applications across the spectrum of science and engineering.  On a domain $\Omega \subset \mathbb{R}^d, d=2,3$, they take the form
	\begin{equation}\label{NS}
		\left\{\begin{aligned}
			-\nu \Delta u+u\cdot\nabla u+ \nabla p&={f} \quad \text{in}~\Omega,\\
			\nabla\cdot {u}&=0\quad \text{in}~\Omega,\\
			{u}&=0 \quad \text{on}~~\partial\Omega,
		\end{aligned}\right.
	\end{equation}
	where $u$ is the velocity of fluid, $p$ is the pressure, $\nu$ is the kinematic viscosity of the fluid,  and ${f}$ is an external forcing term. The parameter $Re:=\frac{1}{\nu}$ represents the Reynolds number.   While our study herein is restricted to nonlinear solvers for the steady system \eqref{NS} with homogenous Dirichlet boundary conditions, the results are extendable to solving the time dependent NSE at a fixed time step in a temporal discretization, as well as nonhomogeneous mixed Dirichlet/Neumann boundary conditions. 
	
NSE solutions are used in simulation processes that can avoid costly physical experiments.  However, solving the NSE can be very challenging, as the NSE are a notoriously hard nonlinear PDE for which analytical solutions are rarely available and numerical solvers often fail.  The purpose of this paper is to develop and analyze a 
seemingly
new numerical solver that is simple to implement, is free from parameters, and has better convergence properties than existing solvers.

Perhaps the most commonly used nonlinear iteration for solving \eqref{NS}  is  the Picard iteration, which is given by (suppressing the weak formulation and boundary conditions for the moment)
	\begin{align*}
		-\nu \Delta u_{k+1}+u_k\cdot\nabla u_{k+1}+ \nabla p_{k+1}&={f}, \\
		\nabla\cdot {u}_{k+1}&=0.
	\end{align*}
	We will denote this iteration as a fixed point iteration via $u_{k+1} = g_P(u_k)$.
This iteration is known to be globally stable, i.e. bounded for any initial iterate and problem data.  Moreover, for sufficiently small problem data, then Picard is globally convergent with a linear convergence rate \cite{GR86,PRX19}.  

If a good initial guess $u_0$ is available, then a faster alternative to the Picard iteration is the Newton iteration, which is given by \cite{GP83,John16} 
	\begin{align*}
		-\nu \Delta u_{k+1}+u_k\cdot\nabla u_{k+1}+ u_{k+1}\cdot\nabla u_k + \nabla p_{k+1} &={f} + u_k \cdot\nabla u_k, \\
		\nabla\cdot {u}_{k+1}&=0.
	\end{align*}
	We will denote this iteration as a fixed point iteration via $u_{k+1} = g_N(u_k)$.
This iteration will converge quadratically for small data and for a sufficiently close initial guess \cite{GR86,LHRV23}.  A common strategy for solving NSE has been to run a method with larger convergence radius than Newton such as Picard (or Anderson accelerated Picard \cite{EPRX20,PRX19}) or damped Newton, and then switch over to Newton once the iterates get close enough to the solution \cite{John16}.

Over the years, many variations of the Picard and Newton iterations for the NSE have been developed to improve solver robustness, efficiency, or both.
For example, algorithms have been developed that eliminate or reduce the complexity of the linear solves and saddle point problems \cite{GRVZ23,HL09,RVX19,temam} which provide overall more efficient algorithms if they do not increase the total number of nonlinear iterations.  Continuation methods for the NSE can provide convergence for higher Reynolds numbers \cite{dealII95,MR3043640}.  Anderson acceleration (AA) or other extrapolation methods have been shown to improve convergence and robustness \cite{PRX19,PR21}.  Multilevel methods put most of the computations on the coarser levels \cite{DC08,HW08,KLR06,LLP98,LL95,LL96} and can be quite effective, as can defect correction approaches \cite{KLR06,LLP02}, and there is a long list of other techniques can be effective in certain settings \cite{HL09,MVO21}.  Each of these methods have their drawbacks, whether it be complexity of the implementation, sensitivity to parameters, or inefficiency.	

Herein we consider a simple-to-implement solver for the NSE that is a composition of these two well-known methods as a 2 step iteration which we call the Picard-Newton iteration.  At step $k$, the iteration is defined by first performing a Picard iteration, and then second by performing a Newton iteration on the Picard output, i.e. 
\[
u_{k+1} = g_{PN}(u_k) = g_N ( g_P(u_k)).
\]
This solver is intended for use on problems where Newton will not converge, i.e. for higher Reynolds number problems (otherwise there is clearly extra unnecessary work coming from the Picard step), and the main idea is to combine the robustness of Picard with the convergence order of Newton.  We note in particular the simplicity of the method and its implementation.  

The general idea of composing solvers to take advantage of the good properties of each is well established, and is strongly related to the idea of nonlinear preconditioning - indeed here the Picard iteration can be considered a preconditioner for Newton.  An excellent review paper of Brune et al \cite{BKST15} shows how effective composing solvers can be on a variety of problems.  There are also many works on various combinations of the Picard and Newton methods for systems other than NSE, 
such as in imaging \cite{Z19}, periodic systems \cite{LR98}, optimization problems \cite{PMSB12}, and saturated flow problems \cite{PP94}, although none seem to use the simple-to-implement strategy we study herein.
	
The purpose of this paper is to analyze and test the Picard-Newton iteration for the NSE.  The analysis is divided into two parts. First in the case of small data (sufficient for NSE solution uniqueness) we prove the Picard-Newton iteration is stable for any initial iterate (usual Newton is not), and quadratically convergent for a sufficiently close initial guess. Second, for general data where NSE solution uniqueness may not hold, we prove quadratic convergence for a sufficiently close initial guess; and, we show that since the Picard step is globally stable, Picard-Newton is less vulnerable to the blowup behavior common for Newton.  Several numerical tests illustrate the effectiveness of Picard-Newton, and in particular we find excellent results for convergence at higher Reynolds numbers for 2D and 3D driven cavity.  Lastly, we consider enhancing the Picard step with $m=1$ AA, and show that AAPicard-Newton an even more robust and efficient solver that Picard-Newton.

This paper is arranged as follows.  In section 2 we set notation and provide mathematical preliminaries for a smoother analysis to follow.  Section 3 analyzes and tests the Picard-Newton iteration, and section 4 considers AAPicard-Newton.  Conclusions and future directions are discussed in section 5.

\section{Mathematical preliminaries}

We consider an open connected set $\Omega$, and denote the natural function spaces for the NSE by 
	\begin{align}
		&Q:=\{v\in {L}^2(\Omega): \int_{\Omega}v\ dx=0\},\\
		& X:=
		\{v\in H^1\left(\Omega\right): v=0~~\text{on}~ \partial\Omega\},\\
		& V:=
		\{v\in X:  (\nabla \cdot v,q)=0\ \forall q\in Q\}.
	\end{align}
The $L^2$ inner product and norm are denoted by $(\cdot,\cdot)$ and $\|\cdot\|$, respectively.  
Without confusion, the notation 
$(\cdot,\cdot)$ 
is also
 used to represent the duality between $H^{-1}$ and $X$, and $\|\cdot\|_{-1}$ denotes the norm on $H^{-1}$.

\color{black}
	
Recall that the Poincare inequality holds on $X$: there exists a constant $C_P$ dependent only on the domain satisfying 
\begin{align}\label{eqn:poincare}
\| \phi \| \le C_P \|\nabla \phi \| \mbox{ for all } \phi\in X.
\end{align}
		
We use the skew-symmetric form of the nonlinear term: for all $v,w,z\in X,$
\[
b^*(v,w,z) = (v\cdot\nabla w,z) + \frac12 ((\nabla \cdot v)w,z),
\]
and note that similar results hold as those below will hold with other energy preserving formulations of the nonlinear term including rotational form and EMAC \cite{GS98,CHOR17,OR20}.  Note an important property of $b^*$ is that $b^*(u,v,v)=0$ for all $u,v\in X$. If the first argument of $b^*$ satisfies $\| \nabla \cdot v\|=0$, then skew-symmetry is not needed (and has no effect).  For $v\in V$, this is the case.  However, when using certain finite element subspaces of $X$ and $Q$ (such as Taylor-Hood elements), it may happen that the discretely divergence free space does not yield completely divergence-free functions.  Hence, to be general and cover all cases, we utilize skew-symmetry in our analysis.

The following inequality hold for $b^*$ \cite{laytonBook,temam}: there exists a constant $M$ dependent only on the domain $\Omega$ such that
	\begin{align}
b^*(v,w,z)&\leq M \|\nabla v\|\|\nabla w\|\|\nabla z\|. \label{bstarbound}
	\end{align}

\subsection{NSE preliminaries}

Written in terms of $b^\ast$, 
the weak form of the NSE \eqref{NS} is given by: Find $u\in V$ satisfying 
\begin{equation}
\nu(\nabla u,\nabla v) + b^*(u,u,v) = (f,v)\ \forall v\in V. \label{weakNS}
\end{equation}

It is well known that the weak steady NSE system \eqref{weakNS} admits solutions for any $f\in H^{-1}(\Omega)$ and $\nu>0$ \cite{GR86,laytonBook}, and that any solution to \eqref{NS} or  \eqref{weakNS} satisfies
\begin{equation}
\| \nabla u \| \le \nu^{-1} \| f \|_{-1}  
. \label{nsstab}
\end{equation}
A sufficient condition for uniqueness of solutions is that the data satisfy \cite{GR86,laytonBook}
\[
\alpha:=M\nu^{-2} \|f \|_{H^{-1}}<1.
\]
While $\alpha<1$ is not a necessary condition for uniqueness, it is known that for large enough data (i.e. $\alpha$ large enough) that uniqueness breaks down \cite{laytonBook} and the NSE will admit multiple solutions.

\section{Convergence analysis of the Picard-Newton iteration}

In this section we analyze stability and convergence results for the Picard-Newton iteration, and test it on several benchmark problems.  First, under the NSE uniqueness condition $\alpha <1$, we prove the method is globally stable (i.e. stable for any initial iterate) and for a sufficiently good initial iterate converges quadratically.  For $\alpha>1$, we can no longer prove global stability but only that the Picard step of Picard-Newton is stable.  We are still able to prove quadratic convergence, provided the initial guess is sufficiently close to a nonsingular NSE solution.

We begin by stating the Picard-Newton iteration in weak form.

\begin{algorithm}[Picard-Newton iteration] 
\label{algpn}
The Picard-Newton iteration consists of applying the composition of the Newton and Picard iterations: $g_{PN} = g_N \circ g_P$, i.e.
\begin{enumerate}
\item [Step 1:] Find $\hat u_{k+1} = g_P(u_{k})$ by finding $\hat u_{k+1}\in V$ satisfying
\begin{equation}
\nu(\nabla \hat u_{k+1},\nabla v) + b^*(u_k,\hat u_{k+1},v) = (f,v) \ \forall v\in V. \label{wp}
\end{equation}
\item [Step 2:] Find $u_{k+1} = g_N(\hat u_{k+1})$ by finding $u_{k+1}\in V$ satisfying
\begin{equation}
\nu(\nabla u_{k+1},\nabla v) + b^*(\hat u_{k+1},u_{k+1},v) + b^*(u_{k+1},\hat u_{k+1},v)-  b^*(\hat u_{k+1},\hat u_{k+1},v) = (f,v) \ \forall v\in V. \label{wn}
\end{equation}
\end{enumerate}
\end{algorithm}

\subsection{Analysis for the case of $\alpha<1$}

Here we analyze stability and then convergence of the Picard-Newton method.  We first prove that the method is globally stable, i.e. bounded by data and independent of the initial iterate.

\begin{lemma}
\label{lemma1}
Under the uniqueness condition $\alpha<1$, we have that Picard-Newton iteration is stable: for every $k$, it holds that
\begin{align*}
\| \nabla \hat u_k \| & \le \nu^{-1} \| f \|_{-1},\\
\| \nabla  u_k \| & \le  \frac{ 1 + \alpha \nu^{-1} } { 1-\alpha} \| f \|_{-1}.
\end{align*}
\end{lemma}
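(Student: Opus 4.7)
The plan is to prove the two bounds in turn by energy arguments, testing each equation in \eqref{wp}-\eqref{wn} with the new iterate itself and exploiting skew-symmetry of $b^*$ where possible.

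For the Picard step, I would set $v = \hat u_{k+1}$ in \eqref{wp}. Since $b^*(u_k,\hat u_{k+1},\hat u_{k+1})=0$ by skew-symmetry, the nonlinear term drops out entirely. The right-hand side $(f,\hat u_{k+1})$ is bounded using the $H^{-1}$-$X$ duality by $\|f\|_{-1}\|\nabla \hat u_{k+1}\|$. Dividing through by $\|\nabla \hat u_{k+1}\|$ delivers $\|\nabla \hat u_{k+1}\| \le \nu^{-1}\|f\|_{-1}$, matching the first stated inequality. This step mirrors the standard a priori bound \eqref{nsstab} for the continuous NSE solution.

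For the Newton step, I would set $v = u_{k+1}$ in \eqref{wn}. The term $b^*(\hat u_{k+1},u_{k+1},u_{k+1})$ vanishes by skew-symmetry, but the remaining two nonlinear contributions do not: $b^*(u_{k+1},\hat u_{k+1},u_{k+1})$ and $b^*(\hat u_{k+1},\hat u_{k+1},u_{k+1})$ must each be controlled by the trilinear bound \eqref{bstarbound}. After the dust settles one obtains
\[
\nu\|\nabla u_{k+1}\|^2 \le \|f\|_{-1}\|\nabla u_{k+1}\| + M\|\nabla \hat u_{k+1}\|\|\nabla u_{k+1}\|^2 + M\|\nabla \hat u_{k+1}\|^2\|\nabla u_{k+1}\|.
\]
Dividing by $\|\nabla u_{k+1}\|$ and substituting the Picard bound $\|\nabla \hat u_{k+1}\|\le \nu^{-1}\|f\|_{-1}$, the middle term becomes $M\nu^{-1}\|f\|_{-1}\|\nabla u_{k+1}\| = \alpha\nu\|\nabla u_{k+1}\|$ using the definition $\alpha = M\nu^{-2}\|f\|_{-1}$. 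This is exactly the place the hypothesis $\alpha<1$ is used: the coefficient $(1-\alpha)\nu$ on the left must be strictly positive to absorb the middle term and divide through.

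The main obstacle is that middle term $b^*(u_{k+1},\hat u_{k+1},u_{k+1})$: unlike the Picard iteration, the Newton linearization produces a nonlinear contribution that is quadratic in the unknown $u_{k+1}$ after testing, so it cannot be canceled by skew-symmetry and instead must be absorbed into the diffusion term. This is precisely why global stability of Newton alone cannot be expected, and why the Picard preprocessing step is crucial: it supplies a hard a priori bound on $\|\nabla \hat u_{k+1}\|$ that is independent of $u_k$, which is what enables the absorption under $\alpha<1$. The remaining purely data-dependent term $M\|\nabla \hat u_{k+1}\|^2 = \alpha\|f\|_{-1}$ then contributes the numerator, yielding the second stated bound after dividing by $\nu(1-\alpha)$.
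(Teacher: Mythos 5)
Your proof is correct and follows essentially the same route as the paper: test each step with the new iterate, use skew-symmetry to eliminate one trilinear term, bound the remaining two with \eqref{bstarbound}, and absorb the term quadratic in $u_{k+1}$ via the Picard bound \eqref{pstab} together with $\alpha<1$. The only discrepancy is the final constant --- your $\nu^{-1}(1+\alpha)/(1-\alpha)$ versus the stated $(1+\alpha\nu^{-1})/(1-\alpha)$ --- which traces to the paper dropping a factor of $\nu^{-1}$ on the forcing term when dividing through by $\nu$; your bookkeeping is the consistent one.
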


\begin{remark}
Global stability for the usual Newton algorithm for NSE does not hold. For 
$\alpha < 1$, the stability of the Picard-Newton algorithm comes from the
Picard step.
\end{remark}

\begin{proof}
We begin by proving Step 1 is stable.  Taking $v=\hat u_{k+1}$ in \eqref{wp} vanishes the nonlinear term and gives us 
\[
\nu \| \nabla \hat u_{k+1} \|^2 = (f,\hat u_{k+1}) \le \| f\|_{-1} \| \nabla \hat u_{k+1} \|,
\]
which reduces to
\begin{equation}
\| \nabla \hat u_{k+1} \| \le \nu^{-1} \| f \|_{-1}. \label{pstab}
\end{equation}
Note that this holds for any $\alpha>0$.

Next, we consider Step 2 and take $v=u_{k+1}$, which 
eliminates the first nonlinear term and gives
\[
\nu \| \nabla u_{k+1} \|^2= -b^*(u_{k+1},\hat u_{k+1},u_{k+1})+  b^*(\hat u_{k+1},\hat u_{k+1},u_{k+1}) + (f,u_{k+1} ).
\]
Applying the bound \eqref{bstarbound} on the nonlinear terms and handling the forcing term as in the Picard iteration yields
\[
\nu \| \nabla u_{k+1} \|^2= M \| \nabla u_{k+1} \|^2 \|\nabla  \hat u_{k+1} \| + M \| \nabla \hat u_{k+1} \|^2 \| \nabla u_{k+1}\|  + \| f \|_{-1} \| \nabla u_{k+1}\|.
\]
Using the bound \eqref{pstab} and reducing gives us 
\begin{align*}
 \| \nabla u_{k+1} \|^2 & \le  M\nu^{-2} \| f\|_{-1}  \| \nabla u_{k+1} \|^2 + M \nu^{-3} \| f \|_{-1}^2  \| \nabla u_{k+1}\|  + \| f \|_{-1} \| \nabla u_{k+1}\| \\
 & =  \alpha  \| \nabla u_{k+1} \|^2 +  \alpha \nu^{-1} \| f \|_{-1}  \| \nabla u_{k+1}\|  + \| f \|_{-1} \| \nabla u_{k+1}\|,
\end{align*}
and thus since $\alpha<1$ is assumed,
\[
\| \nabla u_{k+1} \| \le  \frac{ 1 + \alpha \nu^{-1} } { 1-\alpha} \| f \|_{-1}.
\]

\end{proof}

We now prove that the Picard-Newton iteration converges quadratically, provided a sufficiently good initial guess.

\begin{theorem} \label{convthm}
Under the uniqueness condition $\alpha<1$, the Picard-Newton algorithm for the steady NSE converges quadratically provided the initial guess is sufficiently close to the NSE solution \eqref{weakNS}.
\end{theorem}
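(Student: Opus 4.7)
\bigskip

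\noindent\textbf{Proof proposal.} The plan is to set up an error equation at each of the two substeps, bound the Picard substep linearly in $\|\nabla e_k\|$, bound the Newton substep quadratically in $\|\nabla \hat e_{k+1}\|$, and then compose the two inequalities. Throughout, write $u$ for the exact NSE solution at which convergence is sought, and define the step errors
\[
\hat e_{k+1} := u - \hat u_{k+1}, \qquad e_{k+1} := u - u_{k+1}.
\]

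First I would analyze the Picard substep. Subtract \eqref{weakNS} from \eqref{wp} and rewrite the nonlinear difference as
\[
b^*(u,u,v) - b^*(u_k,\hat u_{k+1},v) = b^*(e_k,u,v) + b^*(u_k,\hat e_{k+1},v),
\]
to obtain $\nu(\nabla \hat e_{k+1},\nabla v) + b^*(u_k,\hat e_{k+1},v) = -b^*(e_k,u,v)$. Testing with $v=\hat e_{k+1}$, using skew-symmetry on the left, applying \eqref{bstarbound} on the right, and using \eqref{nsstab} yields the standard Picard contraction
\[
\|\nabla \hat e_{k+1}\| \le M\nu^{-2}\|f\|_{-1}\,\|\nabla e_k\| = \alpha \|\nabla e_k\|.
\]

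Next I would analyze the Newton substep. Subtract \eqref{weakNS} from \eqref{wn} and expand each of the nonlinear forms by writing $u_{k+1}=u-e_{k+1}$ and $\hat u_{k+1}=u-\hat e_{k+1}$ (three trilinear expansions). After cancellation, all linear-in-errors terms collapse to $-b^*(u,e_{k+1},v)-b^*(e_{k+1},u,v)$, and the remaining right-hand side is purely quadratic in the errors:
\begin{align*}
\nu(\nabla e_{k+1},\nabla v) &+ b^*(u,e_{k+1},v) + b^*(e_{k+1},u,v) \\
&= b^*(e_{k+1},\hat e_{k+1},v) + b^*(\hat e_{k+1},e_{k+1},v) - b^*(\hat e_{k+1},\hat e_{k+1},v).
\end{align*}
Testing with $v=e_{k+1}$ kills $b^*(u,e_{k+1},e_{k+1})$ and $b^*(\hat e_{k+1},e_{k+1},e_{k+1})$ by skew-symmetry, leaving
\[
\nu\|\nabla e_{k+1}\|^2 + b^*(e_{k+1},u,e_{k+1}) = b^*(e_{k+1},\hat e_{k+1},e_{k+1}) - b^*(\hat e_{k+1},\hat e_{k+1},e_{k+1}).
\]
Applying \eqref{bstarbound} and \eqref{nsstab} gives
\[
(1-\alpha)\nu\,\|\nabla e_{k+1}\| \le M\|\nabla \hat e_{k+1}\|\,\|\nabla e_{k+1}\| + M\|\nabla \hat e_{k+1}\|^2.
\]
Provided $\|\nabla \hat e_{k+1}\|$ is small enough that $M\|\nabla \hat e_{k+1}\| \le \tfrac12(1-\alpha)\nu$, the first term on the right can be absorbed into the left, producing a Newton-type bound
\[
\|\nabla e_{k+1}\| \le \frac{2M}{(1-\alpha)\nu}\,\|\nabla \hat e_{k+1}\|^2.
\]
Composing with the Picard bound yields
\[
\|\nabla e_{k+1}\| \le \frac{2M\alpha^2}{(1-\alpha)\nu}\,\|\nabla e_k\|^2,
\]
which is exactly quadratic convergence once $\|\nabla e_0\|$ is small enough to validate both the absorption step and to ensure the prefactor times $\|\nabla e_k\|$ is less than one (so that the iteration stays inside the convergence ball).

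The routine part is the Picard bound, which is essentially textbook. The part requiring care is the algebraic bookkeeping of the three trilinear expansions in the Newton substep: the proof only works because, after cancellation, every surviving right-hand-side term is quadratic in the errors, so that the single ``bad'' linear-in-$e_{k+1}$ term $b^*(e_{k+1},u,e_{k+1})$ can be controlled by $\alpha\nu\|\nabla e_{k+1}\|^2$ and reabsorbed using $\alpha<1$. This is the main obstacle, and it is also the place where the hypothesis $\alpha<1$ is used (beyond guaranteeing existence of the target $u$).
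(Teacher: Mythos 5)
Your proposal is correct and follows the same overall strategy as the paper: a linear Picard contraction $\|\nabla \hat e_{k+1}\| \le \alpha\|\nabla e_k\|$ composed with a quadratic Newton-step estimate obtained by testing the error equation with $e_{k+1}$. The one substantive difference is in how you group the trilinear terms in the Newton substep. You expand everything in terms of $u$ and the errors, which leaves the extra cubic-in-error term $b^*(e_{k+1},\hat e_{k+1},e_{k+1})$ on the right; absorbing it forces the additional smallness condition $M\|\nabla\hat e_{k+1}\|\le\tfrac12(1-\alpha)\nu$ (and costs you a factor of $2$ in the final constant). The paper instead keeps the term $b^*(e_{k+1},\hat u_{k+1},e_{k+1})$ and bounds $\|\nabla\hat u_{k+1}\|\le\nu^{-1}\|f\|_{-1}$ by the global Picard stability \eqref{pstab}, so the absorption is unconditional under $\alpha<1$ and the right-hand side is purely $M\nu^{-1}\|\nabla\hat e_{k+1}\|^2$; the closeness hypothesis is then needed only to make the quadratic recursion $\|\nabla e_{k+1}\|\le \frac{M\nu^{-1}\alpha^2}{1-\alpha}\|\nabla e_k\|^2$ contract. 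Both routes are valid; yours requires you to additionally verify (by the monotone decrease of the errors) that the absorption condition propagates to all $k$, which you gesture at but should state as an explicit induction.
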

\begin{remark}
The theorem above requires a sufficiently close initial guess $u_0$, and in particular a sufficient condition that comes from the proof is that
\[
\frac{ M\nu^{-1}\alpha^2 }{1-\alpha} \| \nabla (u-u_0) \| <1.
\]
This shows
that as $\alpha$ gets smaller the convergence basin for the initial guess grows (and approaches all of $X$ as $\alpha\rightarrow 0$).

With the typical guess of $u_0=0$, the condition reduces to 
\[
1 > \frac{ M\nu^{-1}\alpha^2 }{1-\alpha} \| \nabla (u - u_0) \| = \frac{ M\nu^{-1}\alpha^2 }{1-\alpha} \| \nabla u \|.
\]
Since $\| \nabla u \| \le \nu^{-1} \| f \|_{-1}$ by \eqref{nsstab}, this sufficient condition is implied by
\[
1>  \frac{ M\nu^{-2} \| f \|_{-1} \alpha^2 }{1-\alpha} = \frac{\alpha^3}{1-\alpha} \iff 1-\alpha > \alpha^3.
 \]
Hence $\alpha<0.682$ is a sufficient condition for Picard-Newton to converge quadratically when the initial guess is $u_0=0$.  In our tests, this sufficient condition is quite pessimistic and we observe convergence even when $\alpha \gg 1$.
\end{remark}

\begin{proof}[Proof of Theorem \ref{convthm}]
Let $u$ be the solution of \eqref{weakNS}, 
and denote $e_k = u - u_k$ and $\hat e_k = u - \hat u_k$.  
Since $u\in V$, we have  $e_k, \hat e_k \in V$ as well.  

To bound the error in Step 1, we 
subtract the Step 1 equation \eqref{wp} from \eqref{weakNS}, which gives
\[
\nu(\nabla \hat e_{k+1},\nabla v) + b^*(u_k,\hat e_{k+1},v) + b^*(e_k,u,v) = 0 \ \forall v\in V. 
\]
Taking $v=\hat e_{k+1}$, the first nonlinear term vanishes and we obtain
\begin{align*}
\nu \| \nabla \hat e_{k+1} \|^2 &= -b^*(e_k,u,\hat e_{k+1})  
\le M \| \nabla e_k \| \| \nabla u \| \| \nabla \hat e_{k+1} \| 
 \le M \nu^{-1} \| f \|_{-1} \| \nabla e_k \|  \| \nabla \hat e_{k+1} \|,
\end{align*}
thanks to the bound \eqref{bstarbound}  on $b^*$ and \eqref{nsstab}.  Multiplying both sides by $\nu^{-1}$, using the definition of $\alpha$ and reducing, we get the bound
\begin{equation}
\| \nabla \hat e_{k+1} \| \le \alpha \| \nabla e_k \|. \label{picerr}
\end{equation}

Next, subtract Step 2 \eqref{wn} from \eqref{weakNS}, term by term, which yields
\begin{align}\label{n1}
\nu(\nabla e_{k+1},\nabla v) 
+ b^*(\hat u_{k+1},e_{k+1},v)
+ b^*(e_{k+1},\hat u_{k+1},v)
- b^*(\hat e_{k+1},\hat e_{k+1},v)= 0 \ \forall v\in V, 
\end{align}
noting that we used the identities $b^*(u,u,v)-b^*(\phi,\chi,v)=b^*(u-\phi,\chi,v)+b^*(u,u-\chi,v)$
and $b^*(u,u,v)-b^*(\phi,\chi,v)=b^*(u,u-\chi,v)+b^*(u-\phi,\chi,v)$ on the three nonlinear term differences.
Taking $v=e_{k+1}$ vanishes the second term, leaving
\color{black}
\begin{align*}
\nu \| \nabla e_{k+1} \|^2 & =  - b^*(e_{k+1},\hat u_{k+1},e_{k+1}) + b^*(\hat e_{k+1} ,\hat e_{k+1},e_{k+1})  
\\ & 
\le   M \| \nabla e_{k+1} \|^2 \| \nabla \hat u_{k+1} \|  + M \| \nabla \hat e_{k+1} \|^2  \| \nabla e_{k+1} \|,
\end{align*}
with the last step thanks to  \eqref{bstarbound}.  Now using \eqref{pstab} and \eqref{picerr} and the definition of $\alpha$, we obtain
\begin{align*}
\| \nabla e_{k+1} \| & \le \alpha \| \nabla e_{k+1} \| + M\nu^{-1} \| \nabla \hat e_{k+1} \|^2 
\le \alpha \| \nabla e_{k+1} \| + M\nu^{-1} \alpha^2 \| \nabla e_{k} \|^2,
\end{align*}
which reduces to 
\[
\| \nabla e_{k+1} \| \le \frac{ M\nu^{-1}\alpha^2 }{1-\alpha} \| \nabla e_{k} \|^2
\]
and proves asymptotic quadratic convergence provided the sequence of iterates converges.  A sufficient condition for this is that $e_0$ satisfies
\[
\frac{ M\nu^{-1}\alpha^2 }{1-\alpha} \| \nabla e_0 \| < 1.
\]
This can be satisfied with $u_0$ being sufficiently close to $u$, and how close it needs to be depends on the size of $\alpha$.

\end{proof}

A somewhat sharper analysis than is given above which could allow for larger $\alpha$ comes from a sharper estimate of the nonlinear term.  A key estimate used in the proofs above is \eqref{bstarbound}, which we recall the proof from \cite{GR86,laytonBook}
comes from H\"older and Sobolev embeddings and first shows
\begin{align}\label{eqn:HSE}
b^*(z,w,v) \le M_0 \| z \|^{1/2} \| \nabla z \|^{1/2} \| \nabla w \| \| \nabla v \|,
\end{align}	
and then applies the Poincar\'e inequality \eqref{eqn:poincare} to obtain
\[b^\ast(z,w,v) 
\le M \| \nabla z \| \| \nabla w \| \| \nabla v \|,
\]
where $M=C_P^{1/2} M_0$.  
Hence to get a sharper estimate, 
we can back up one step to \eqref{eqn:HSE} to obtain
\begin{align}
b^*(z,w,v)&\leq \frac{M}{C_P^{1/2}} \left( \frac{\| z \|}{\|\nabla z\|} \right) ^{1/2}  \|\nabla z\| \|\nabla w\|\|\nabla v\|. \label{bstarboundb}
\end{align}
The main advantage of the bound \eqref{eqn:HSE} over \eqref{bstarboundb} is that
for complex flows we expect $\| u \| \ll \| \nabla u \|$, 
(i.e. vorticity is more complex than velocity) hence the term
$\left( \frac{\| z \|}{\|\nabla z\|} \right) ^{1/2}$ can in practice be small.
In all of our numerical tests herein, the ratio 
$\left( \frac{\| u \|}{\|\nabla u\|} \right) ^{1/2}$ is never larger than 0.2.  
Hence if the iteration is close to the root, then we expect similar size ratios for 
$z=u$, $u_{k+1}$, $\hat u_{k+1}$, $e_{k+1}$, and $\hat e_{k+1}$.  
This  helps explain why we often in practice see convergence behavior in the regime 
where $\alpha > 1$ that agrees with the above analysis even though the restrictive
data condition is not met.
In the next section we consider further what can be shown if $\alpha < 1$ does not hold.
\color{black}

\subsection{The case of large problem data}

The proofs above for global stability and convergence of the Picard-Newton iteration no longer hold when $\alpha\ge 1$.  Moreover, classical  convergence proofs of the usual Picard iteration and usual Newton iteration (see e.g. \cite{John16,GR86}) also no longer hold.  Still, convergence of the Picard and Newton iterations for larger data has been shown in many applications, and so we consider this important case also.  We are able to prove for any $\alpha>1$ that Picard-Newton is locally quadratically convergent to a nonsingular NSE solution.

We begin by recalling from \cite[Chapter 6.4]{laytonBook} that isolated nonsingular 
solutions of the NSE exist for any $\alpha>0$.
For an isolated solution, there exists some positive $r$ so that a ball radius $r$ around that solution contains no other NSE solution.  
Moreover, if $u^*$ is a nonsingular solution, then there exists $\delta>0$ depending on $\Omega,\ \nu, u^*$ such that for any $w\in V$,
\begin{equation}
\sup_{0\ne v\in V} \frac{ \nu(\nabla w,\nabla v) + b^*(u^*,w,v) + b^*(w,u^*,v)}{\| \nabla v \|} \ge \delta \| \nabla w\|. \label{nsenonsingular}
\end{equation}

This property for nonsingular solutions is key to proving the following local convergence result, which holds for any $\alpha>0$.  We note that the proof of the theorem requires the initial guess to be good enough, and as the Reynolds number grows 
the basin of convergence gets smaller.
We restrict our stability analysis to this basin of convergence.
However, the Picard step is globally stable so if unstable growth occurs
at a Newton step, the growth is automatically controlled by the globally
bounded Picard step that follows.
In other words, since the stability at the half step is guaranteed, 
the Newton step is always applied with an input velocity that is bounded, 
and can not be part of a sequence of iterates that is blowing up.  
\color{black}

\begin{thm} [Local convergence to nonsingular solutions for any problem data]
Let $u$ be a nonsingular steady NSE solution for given data $f,\nu,\Omega$.  Then for an initial guess sufficiently close to $u$, the Picard-Newton iteration converges quadratically to $u$.
\end{thm}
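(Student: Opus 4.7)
The plan is to mimic the split structure of the proof of Theorem \ref{convthm}, estimating the Picard half-step error and the Newton half-step error separately, but replacing the global data bound $\alpha<1$ by the local nonsingularity inequality \eqref{nsenonsingular} evaluated at the target solution $u$. Throughout I set $e_k := u - u_k$ and $\hat e_k := u - \hat u_k$, and work in a neighborhood of $u$ whose radius is shrunk a finite number of times as the argument proceeds.

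First, for the Picard half-step, I would recycle the error equation already derived in the proof of Theorem \ref{convthm},
\[
\nu(\nabla \hat e_{k+1},\nabla v) + b^*(u_k,\hat e_{k+1},v) + b^*(e_k,u,v) = 0,
\]
and test with $v = \hat e_{k+1}$. Skew-symmetry kills the first nonlinear term, and \eqref{bstarbound} applied to the remaining term yields a linear bound of the form $\|\nabla \hat e_{k+1}\| \le C_1 \|\nabla e_k\|$ with $C_1 := M\nu^{-1}\|\nabla u\|$. Note that this step does not use $\alpha<1$ anywhere; only that $\|\nabla u\|$ is finite for the specific target solution $u$. The constant $C_1$ may be much larger than one at large Reynolds number, but it is a fixed finite constant depending only on $u,\nu,\Omega$.

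Next, for the Newton half-step, I would subtract \eqref{wn} from \eqref{weakNS}, rearrange the three nonlinear differences as in the proof of Theorem \ref{convthm}, and then move $b^*(u,e_{k+1},v)$ and $b^*(e_{k+1},u,v)$ to the left. The left-hand side then becomes precisely the bilinear form appearing in \eqref{nsenonsingular} applied to $w = e_{k+1}$, while the right-hand side collects terms that are either quadratic in $\hat e_{k+1}$ or mixed bilinear in $\hat e_{k+1}$ and $e_{k+1}$. Applying \eqref{nsenonsingular} on the left and \eqref{bstarbound} to bound each term on the right produces
\[
\delta\|\nabla e_{k+1}\| \le 2M\|\nabla \hat e_{k+1}\|\,\|\nabla e_{k+1}\| + M\|\nabla \hat e_{k+1}\|^2.
\]
Provided $\|\nabla \hat e_{k+1}\|$ is small enough that $2M\|\nabla \hat e_{k+1}\| \le \delta/2$, the linear-in-$e_{k+1}$ term can be absorbed into the left, giving the purely quadratic bound $\|\nabla e_{k+1}\| \le (2M/\delta)\|\nabla \hat e_{k+1}\|^2$.

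Composing the two half-step bounds yields $\|\nabla e_{k+1}\| \le C \|\nabla e_k\|^2$ with $C := 2MC_1^2/\delta$. To close the proof by induction I would choose $u_0$ close enough to $u$ so that both (i) $\|\nabla \hat e_1\| \le \delta/(4M)$ after the first Picard step, and (ii) $C\|\nabla e_0\| < 1$; each condition is achieved by shrinking the initial radius by a finite factor depending only on $u,\nu,\delta,\Omega$. The recursion $C\|\nabla e_{k+1}\| \le (C\|\nabla e_k\|)^2$ then propagates both conditions to every iterate and produces quadratic convergence. The main obstacle in the plan is handling the Newton step without the global assumption $\alpha<1$: the absorption of the mixed term $2M\|\nabla \hat e_{k+1}\|\,\|\nabla e_{k+1}\|$ into the left-hand side now relies on the purely local inf-sup constant $\delta$ from \eqref{nsenonsingular}, which is what localizes the argument to a neighborhood of $u$ and also means the guaranteed convergence basin is qualitative rather than explicit in the data.
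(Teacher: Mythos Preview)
Your proposal is correct and follows essentially the same route as the paper's proof: bound the Picard half-step linearly, then for the Newton half-step add $b^*(u,e_{k+1},v)+b^*(e_{k+1},u,v)$ to both sides of the error equation so that the left-hand side is exactly the nonsingularity form \eqref{nsenonsingular}, apply \eqref{bstarbound} on the right, and absorb the mixed $\hat e_{k+1}$--$e_{k+1}$ term under a smallness assumption on $\|\nabla e_k\|$. The only cosmetic differences are that the paper writes the Picard constant as $\alpha=M\nu^{-2}\|f\|_{-1}$ (an upper bound for your sharper $C_1=M\nu^{-1}\|\nabla u\|$) and phrases the absorption via $\lambda_k:=\delta-2M\alpha\|\nabla e_k\|>0$ rather than your explicit $2M\|\nabla\hat e_{k+1}\|\le\delta/2$; neither changes the argument.
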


\begin{proof}
We recall from the convergence proof in the $\alpha<1$ case that \eqref{picerr} holds for any $\alpha>0$:
\[
\| \nabla \hat e_{k+1} \| \le \alpha \| \nabla e_k \|.
\]
To analyze the Newton step, we proceed just as for \eqref{n1}, which states
\[
\nu(\nabla e_{k+1},\nabla v) 
= -b^*(\hat u_{k+1},e_{k+1},v)
- b^*(e_{k+1},\hat u_{k+1},v) 
+ b^*(\hat e_{k+1},\hat e_{k+1},v) \ \forall v\in V. 
\]
Adding $b^*(u,e_{k+1},v)$ and $b^*(e_{k+1},u,v)$ to both sides yields 
\begin{multline*}
\nu(\nabla e_{k+1},\nabla v) 
+ b^*(u,e_{k+1},v)
+ b^*(e_{k+1},u,v)
= \\
 b^*(\hat e_{k+1},e_{k+1},v)
+ b^*(e_{k+1},\hat e_{k+1},v) 
+ b^*(\hat e_{k+1},\hat e_{k+1},v) \ \forall v\in V. 
\end{multline*}
We now divide both  sides by (nonzero) $\| \nabla v\|$, then majorize the right hand side using \eqref{bstarbound} and then take the supremum over nonzero $v\in V$:
\[
\sup_{0\ne v\in V} \frac{ \nu(\nabla e_{k+1},\nabla v) 
+ b^*(u,e_{k+1},v)
+ b^*(e_{k+1},u,v)
}{\| \nabla v \|}
\le
2M \| \nabla  \hat e_{k+1} \| \| \nabla  e_{k+1} \| + M \| \nabla \hat e_{k+1} \|^2.
\]
\color{black}
Thus by \eqref{nsenonsingular} since $u$ is nonsingular we can lower bound the left hand side to get 
\begin{align*}
\delta \| \nabla e_{k+1} \| 
& \le 2M \| \nabla  \hat e_{k+1} \| \| \nabla  e_{k+1} \| + M \| \nabla \hat e_{k+1} \|^2 \\
& \le 2M\alpha \| \nabla  e_{k} \| \| \nabla  e_{k+1} \| + M \alpha^2 \| \nabla e_{k} \|^2,
\end{align*}
thanks to \eqref{picerr}.  For $\| \nabla e_k \|$ sufficiently small, this reduces to
\begin{align}\label{eqn:q0}
(\delta - 2M\alpha \| \nabla e_k \|) \| \nabla e_{k+1} \| \le M\alpha^2 \| \nabla e_k \|^2.
\end{align}

From here, the assumption of a good enough initial guess is sufficient to imply quadratic convergence.  
To be precise, define $\lambda_k = \delta - 2M\alpha \| \nabla (u-u_k) \|$, and 
notice that $\lambda_k > \lambda_j$ whenever $\| \nabla e_k\| < \| \nabla e_j\|$. 
Then for an initial iterate $u_0$ that satisfies
\begin{align}\label{eqn:q1}
\lambda_0 >0 ~\text{ and }~
\frac{M\alpha^2}{ \lambda_0} \| \nabla (u-u_0) \| <1,
\end{align}
each subsequent iterate satisfies 
\begin{align}\label{eqn:q2}
\| \nabla e_{k+1} \| \le \f{M \alpha^2}{\lambda_k} \| \nabla e_k\|^2 
                     \le \f{M \alpha^2}{\lambda_0} \| \nabla e_k\|^2 
                     \le \f{M \alpha^2}{\lambda_0} \| \nabla e_0\| \| \nabla e_k\|,
~k \ge 0, 
\end{align}
courtesy of \eqref{eqn:q0},
by which the iteration converges at least linearly, and in fact quadratically. 

\end{proof}

\begin{remark}
More generally, it does not necessarily have to be the initial iterate $u_0$ that is 
sufficiently close to $u$. Rather, if some iterate $u_J$ satisfies \eqref{eqn:q1},
namely
\[
\lambda_J >0 ~\text{ and }~
\frac{M\alpha^2}{ \lambda_J} \| \nabla (u-u_J) \| <1,
\]
then \eqref{eqn:q2} holds for each $k \ge J$ rather than each $k \ge 0$. This is 
particularly important in light of the previously noted boundedness of each Picard step,
as we notice in practice that the iteration may produce a sequence of bounded (at each 
Picard half-step) then non-converging Newton iterates before falling into the basin of 
convergence. This combination of the Picard iterates preventing the Newton iterates from 
sequentially blowing up in the preasymptotic 
regime, followed by the quadratic convergence from the Newton iterates in the asymptotic
regime, gives this two-step iteration an attractive combination of robustness and 
efficiency. 
\end{remark}
\color{black}

\subsection{Numerical tests for Picard-Newton}

We give here results from three numerical tests with Picard-Newton.  They show that the method has good stability properties, converges quadratically, and is able to converge for much higher Reynolds numbers for the driven cavity problems than usual Picard or Newton.  Scott-Vogelius conforming finite elements are used for all of our tests, with velocity-pressure pairs $(X_h,Q_h)=(P_k,P_{k-1}^{disc})\cap (X,Q)$ and $k=d$.  The meshes are constructed as simplexes, with barycenter refinements  (a.k.a. Alfeld split in the Guzm\'an-Neilan vernacular) as the final step of the mesh construction in order to satisfy the LBB condition \cite{arnold:qin:scott:vogelius:2D,Z05_Alfed}.

For the 2D tests, the linear systems that arise at each step of Picard and Newton are solved with a direct solver.  In 3D, we use GMRESM as the outer solver, and to precondition we follow the ideas of \cite{benzi,HR13} and add grad-div stabilization with parameter 100 (which has no effect on solution since we use divergence-free elements) and precondition the Schur complement with the pressure mass matrix.  We perform the inner solves directly.  This is a very effective solver, and typically it converges in less than 5 iterations with tolerance $10^{-8}$ for both Picard and for Newton.  We tested changing the tolerance for Picard linear solvers to $10^{-3}$ (to make the preconditioner less computationally expensive), and it made almost no difference in the convergence of the nonlinear solver.

\subsubsection{Analytical test}

For our first numerical test, we illustrate the theory above in that Picard-Newton converges quadratically and will have a larger convergence basin than usual Newton.  We consider solving the steady NSE with exact solution
\begin{align*}
u_{nse} = \begin{pmatrix}
\cos(x)\sin(y)\\
-\sin(x)\cos(y)
\end{pmatrix}, \ \ p_{nse} = -\frac{1}4 \left( \cos(2x) + \cos(2y)\right) + x+y,
\end{align*}
over a unit square $\Omega$ and with Reynolds number $Re = 1/\nu =1000$.  We calculate $f$ from the NSE and chosen solution and $\nu$.  Nonhomogenous boundary conditions are used in the tests, and the true solution is used as the boundary condition.  Tests are run for the Picard iteration, the Newton iteration, and the proposed Picard-Newton iteration, with varying initial guesses.

\begin{figure}[h!]
\centering
\includegraphics[scale = .15]{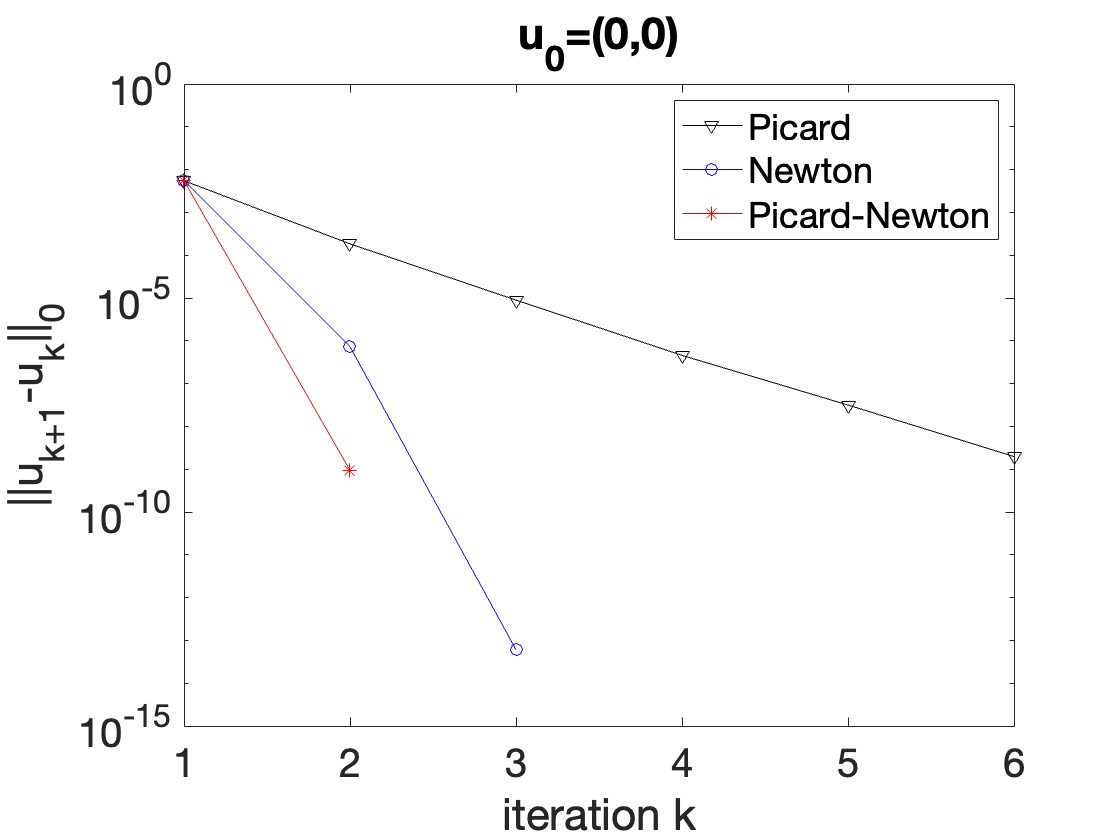}
\includegraphics[scale = .15]{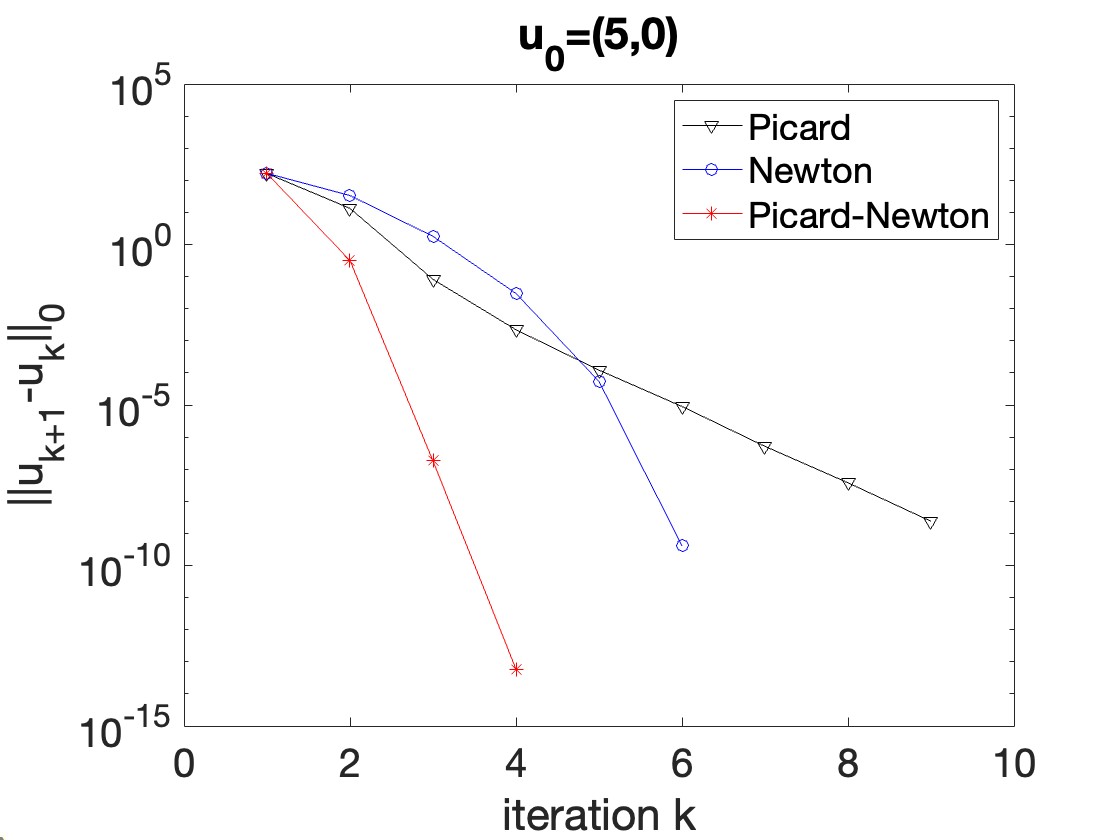}\\
\includegraphics[scale = .15]{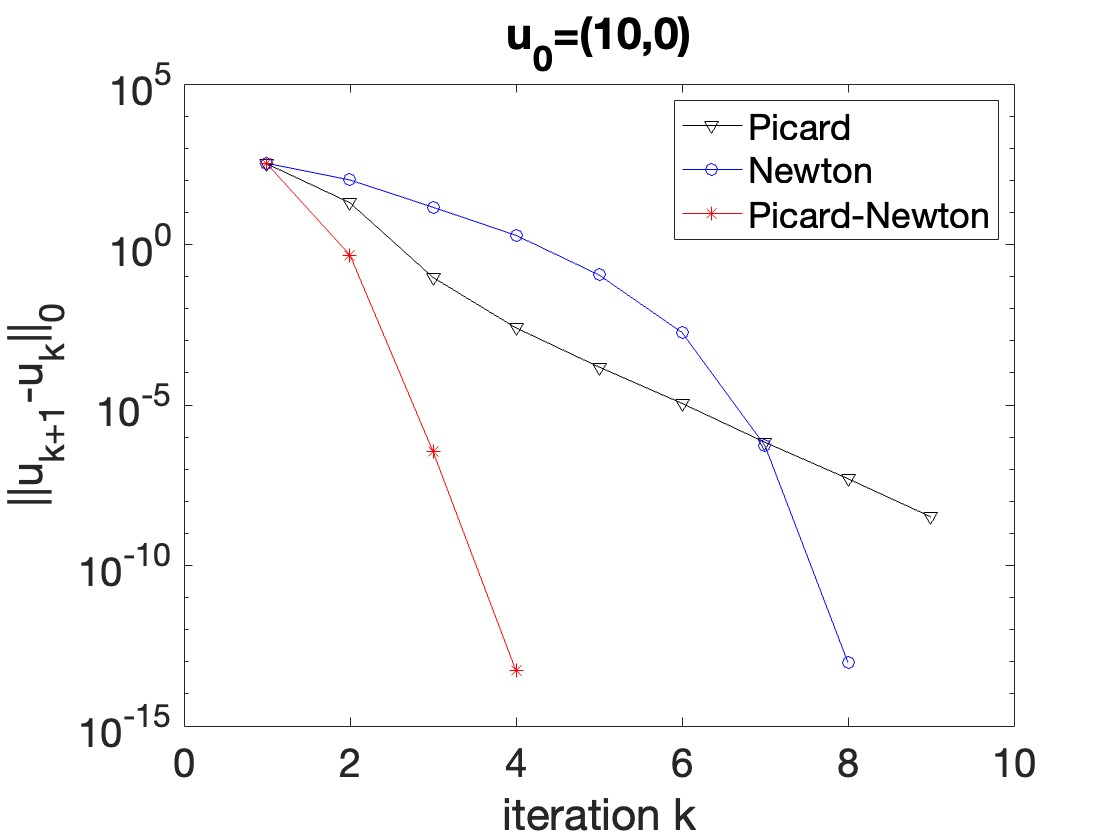}
\includegraphics[scale = .15]{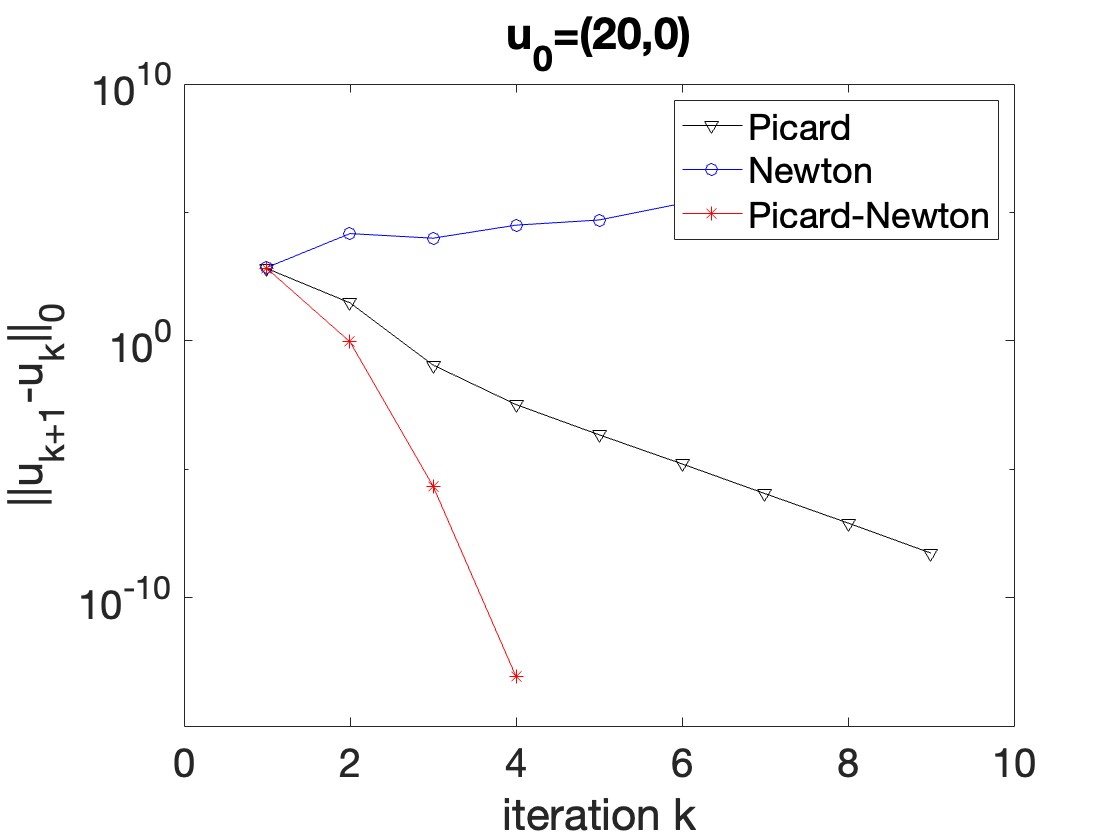}
\caption{Convergence plots of the analytical test with various initial values $u_0 =(c,0)^T, c =0,1,5,10,20$.\label{f1}
}
\end{figure}

For computations, we partition the domain into a barycenter refinement of a uniform triangular mesh with maximum diameter $h = 1/32$,  choose $(P_2, P_1^{dc})$ Scott-Vogelius elements, and set the convergence tolerance for the iterative methods (Picard, Newton and Picard-Newton) to be $tol = 10^{-8}$ in the $L^2$ norm.  \begin{align*}
u_0 = \begin{pmatrix}
c\\0
\end{pmatrix} \text{ in } \Omega, \ u_0 = u_{ex} \text{ on } \partial\Omega,
\end{align*}
with $c=$0, 5, 10, and 20.

Convergence results are shown in figure \ref{f1} for each of the different initial guesses.  We observe linear convergence of Picard in all tests, and superlinear convergence for Newton (when it converged) and Picard-Newton always converging faster than Newton.  However, Newton failed to converge for the largest initial guess.  We note that we checked that all solutions found by the solvers were accurate to within $10^{-4}$ in the $L^2$ norm, which is approximately the optimal accuracy one can expect from interpolation on this mesh.

\subsubsection{2D driven cavity}

For our next test we consider Picard-Newton applied to the 2D driven cavity benchmark problem.  The domain is the unit square $\Omega=(0,1)^2$, and we compute with $(P_2,P_1^{disc})$ Scott-Vogelius elements on a barycenter refined uniform triangular meshes.  We use $f=0$ and Dirichlet boundary conditions that enforce no-slip velocity on the sides and bottom, and $\langle 1,0\rangle^T$ on the top (lid).  In this problem $Re:=\nu^{-1}$.  A plot of the velocity solutions found on the barycenter refined $h=\frac{1}{128}$ mesh (394K velocity degrees of freedom (dof), 294K pressure dof) for varying $Re$ are shown in figure \ref{nseplot1}, and these compare well with the literature \cite{erturk}.    We note that the initial guess is $u_0=0$, and no continuation or acceleration methods are used here.

\begin{figure}[H]
\center
\includegraphics[width = .24\textwidth, height=.24\textwidth,viewport=115 45 460 390, clip]{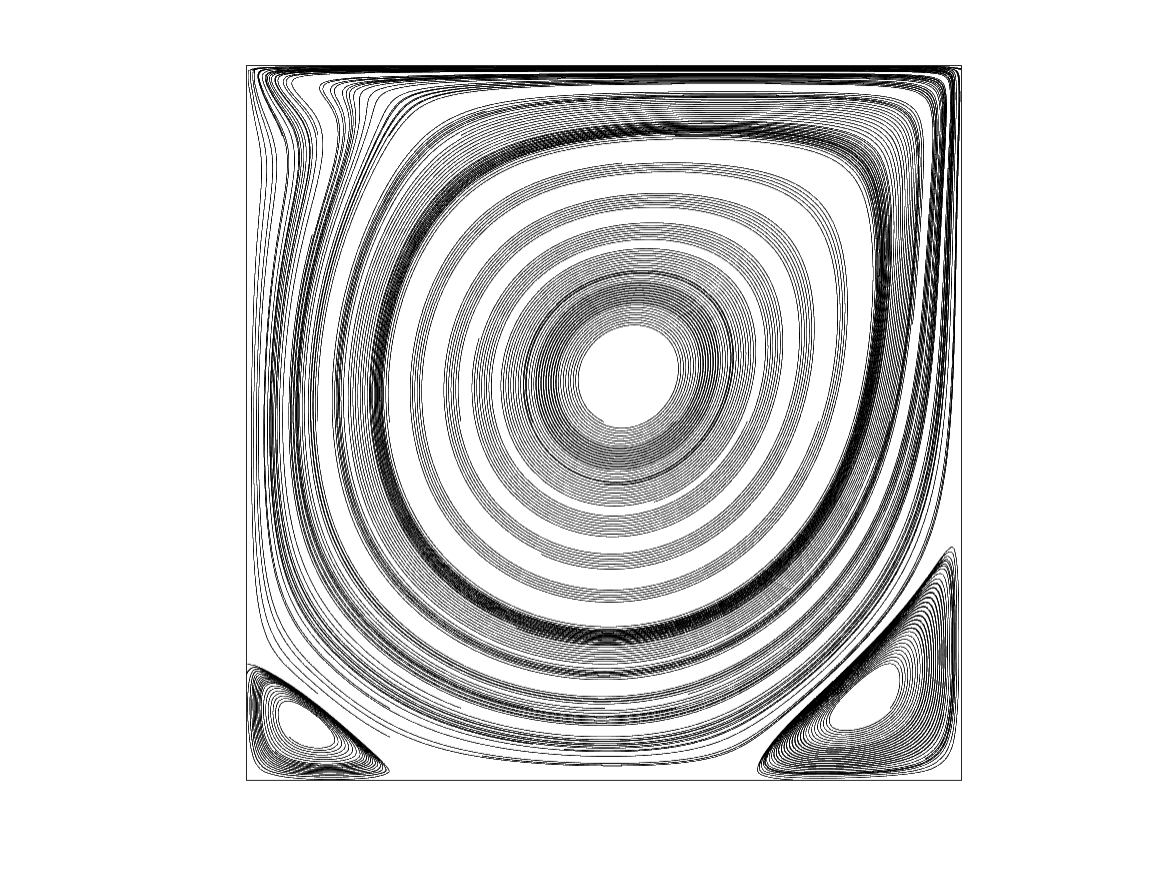}
\includegraphics[width = .24\textwidth, height=.24\textwidth,viewport=115 45 460 390, clip]{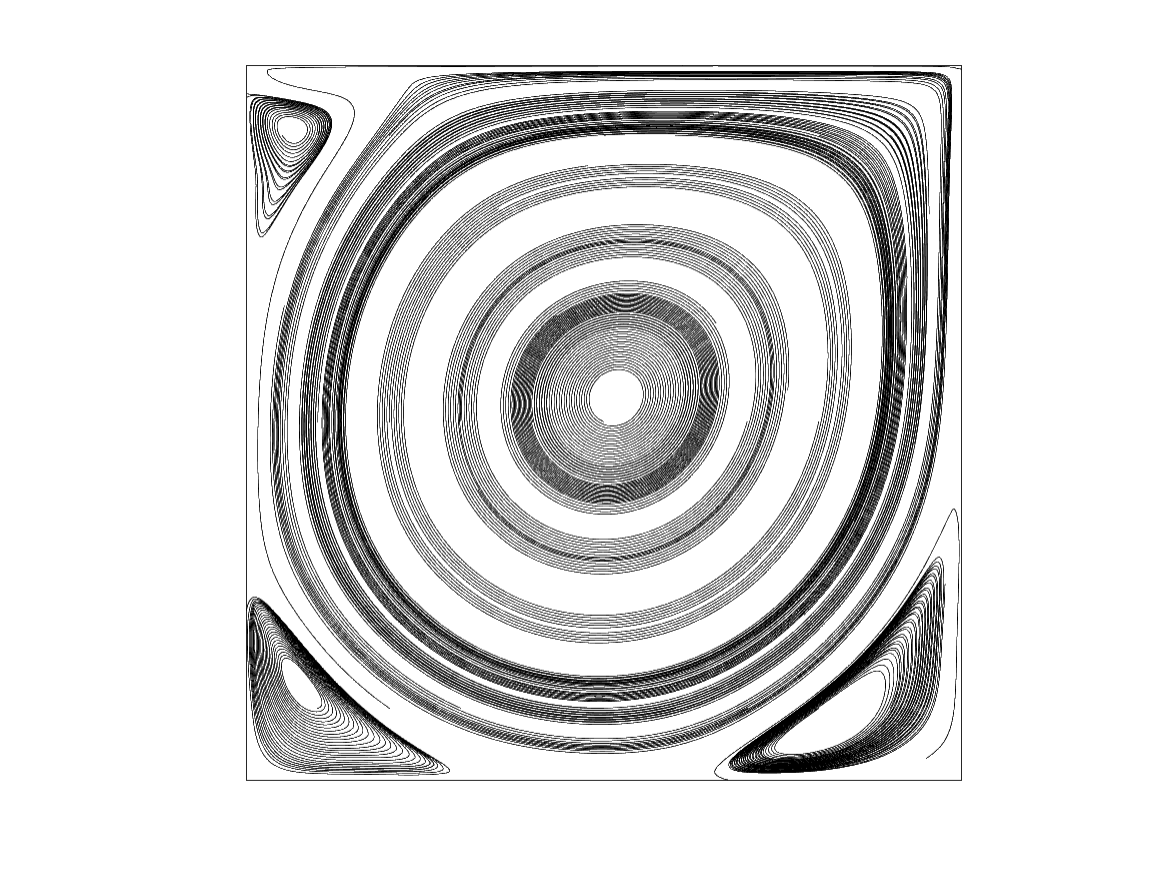}
\includegraphics[width = .24\textwidth, height=.24\textwidth,viewport=115 45 460 390, clip]{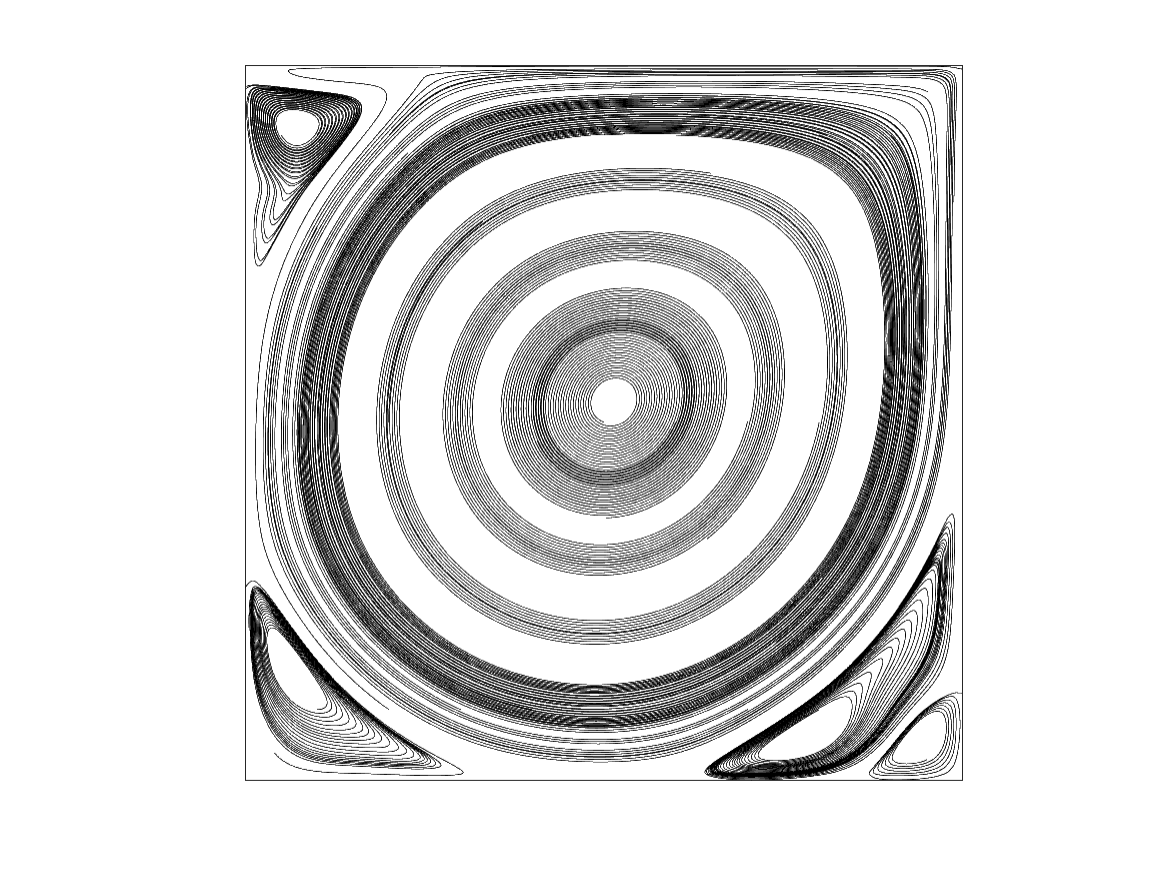}
\includegraphics[width = .24\textwidth, height=.24\textwidth,viewport=115 45 460 390, clip]{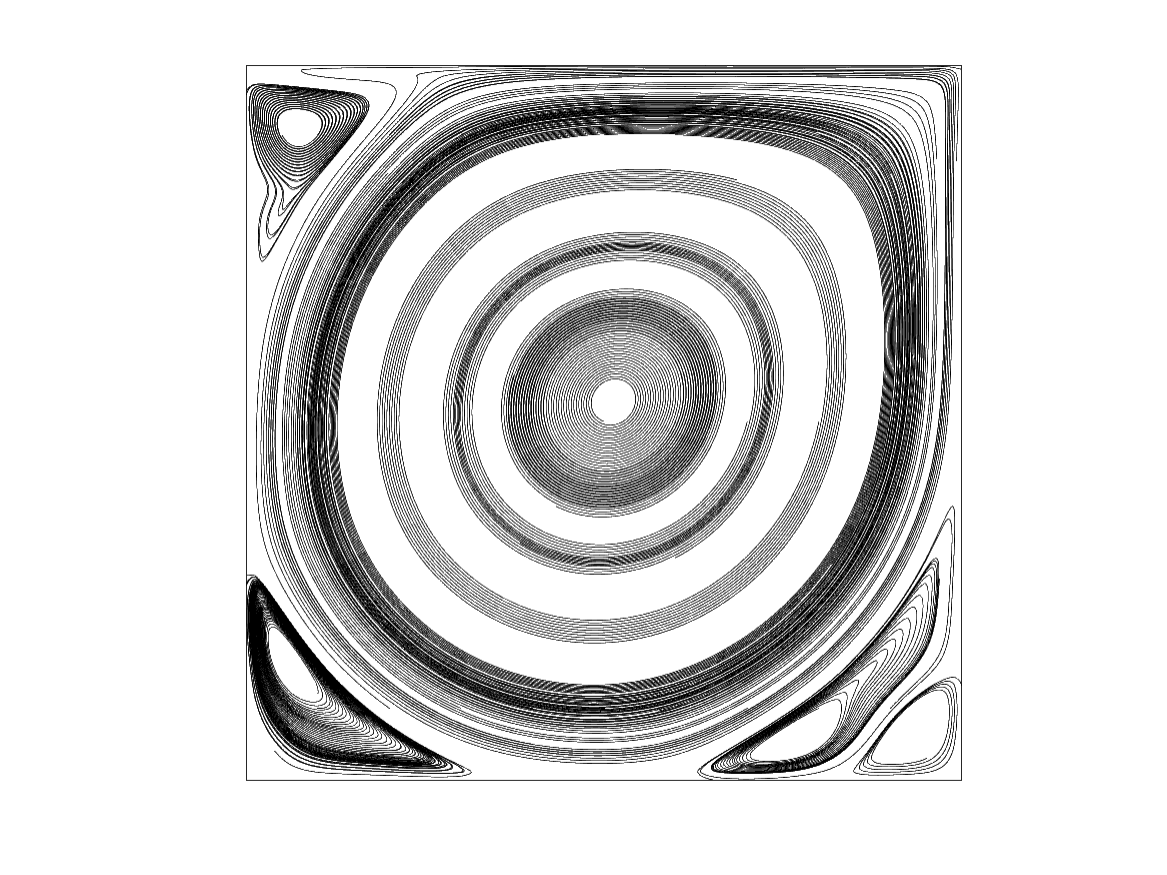}
\caption{\label{nseplot1} Shown above are streamlines of velocity solutions found for the 2D driven cavity problems with $Re$=1000, 5000, 10000, and 12000 (from top left to bottom right).
}
\end{figure}

We run tests with  $Re$=1000, 2500, 5000, 10000 and 12000, on meshes constructed as barycenter refinements of uniform triangulations with
$h=\frac{1}{32},\ \frac{1}{64},\ \frac{1}{128}$ and $\frac{1}{196}$, which provided 42K, 170K, 688K, and 1.6M dof respectively.  Convergence results are shown in figure \ref{conv1} as plots of the fixed point residuals (i.e. difference in successive iterates) in the $H^1$ norm, and we observe quadratic convergence in all cases except for Re=12000 on the finest mesh, which fails.  The convergence for $Re\le 2500$ appears to be mesh independent, which our analysis proves is the case for small data.  However, for $Re\ge 5000$, we observe that convergence is slower on each successive mesh, e.g. for Re=10000 the number of iterations to converge on the successively finer meshes is 6, 23, 52 and 56.  For these higher $Re$, we observe residual values stayed stable but did not decrease in the early iterations, before finally converging.  

\begin{figure}[H]
\center
P-N 1/32+bary \hspace{1.5in} P-N 1/64+bary  \\ 
\includegraphics[width = .4\textwidth, height=.35\textwidth,viewport=0 0 530 400, clip]{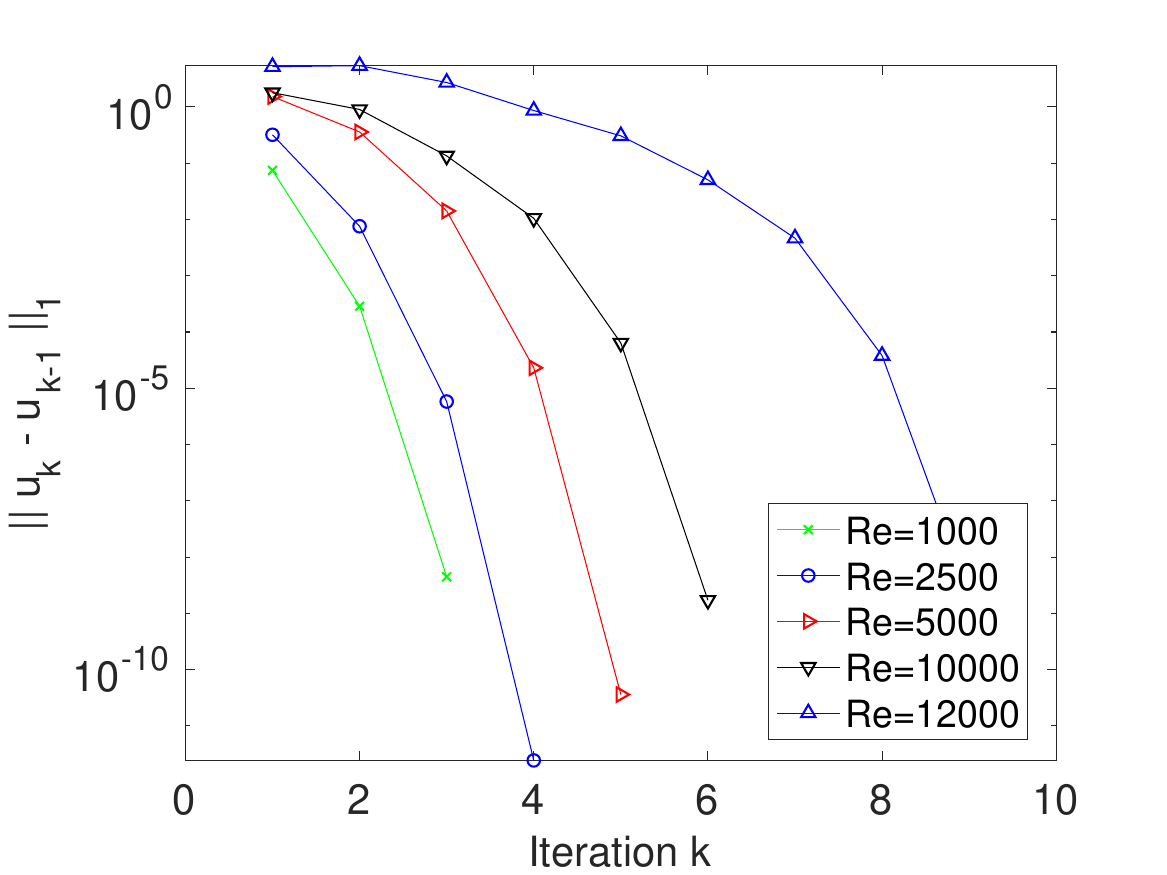}
\includegraphics[width = .4\textwidth, height=.35\textwidth,viewport=0 0 530 400, clip]{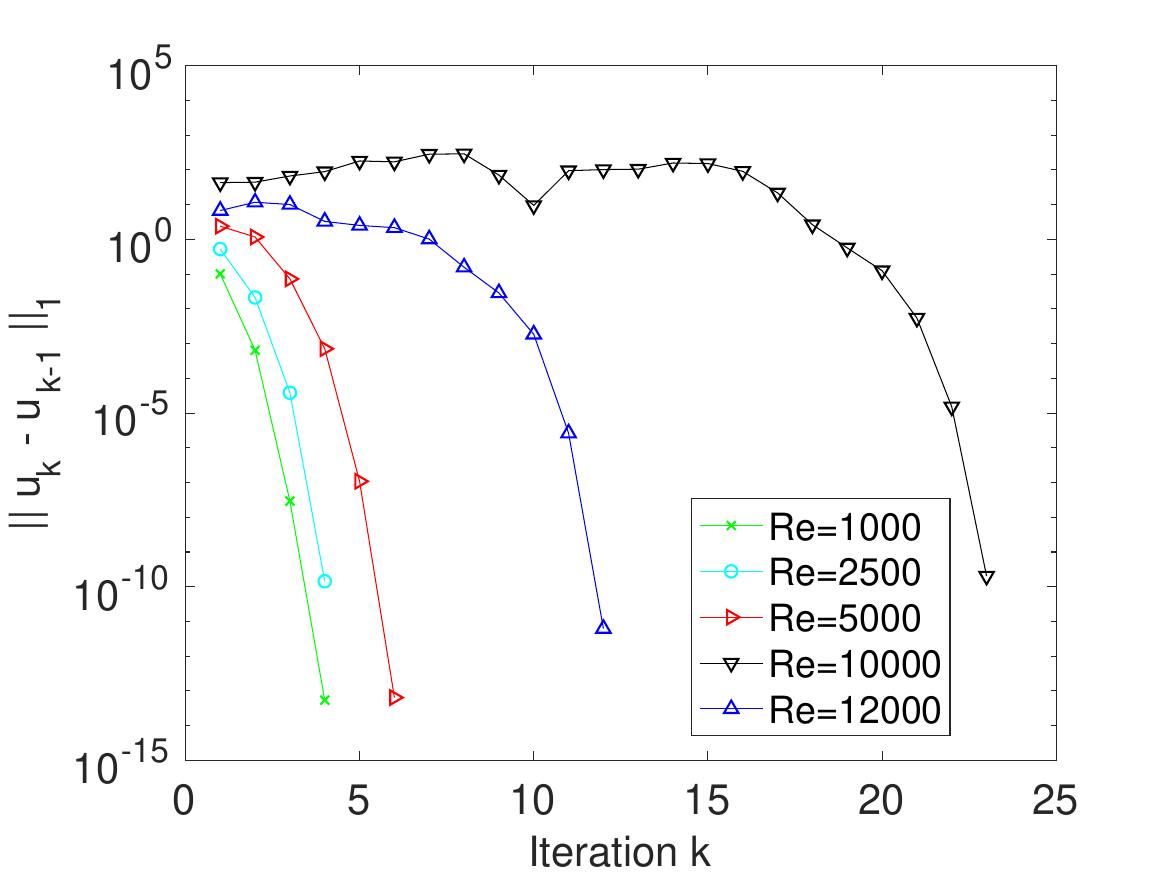} \\
P-N 1/128+bary \hspace{1.5in} P-N 1/196+bary \\
\includegraphics[width = .4\textwidth, height=.35\textwidth,viewport=0 0 530 400, clip]{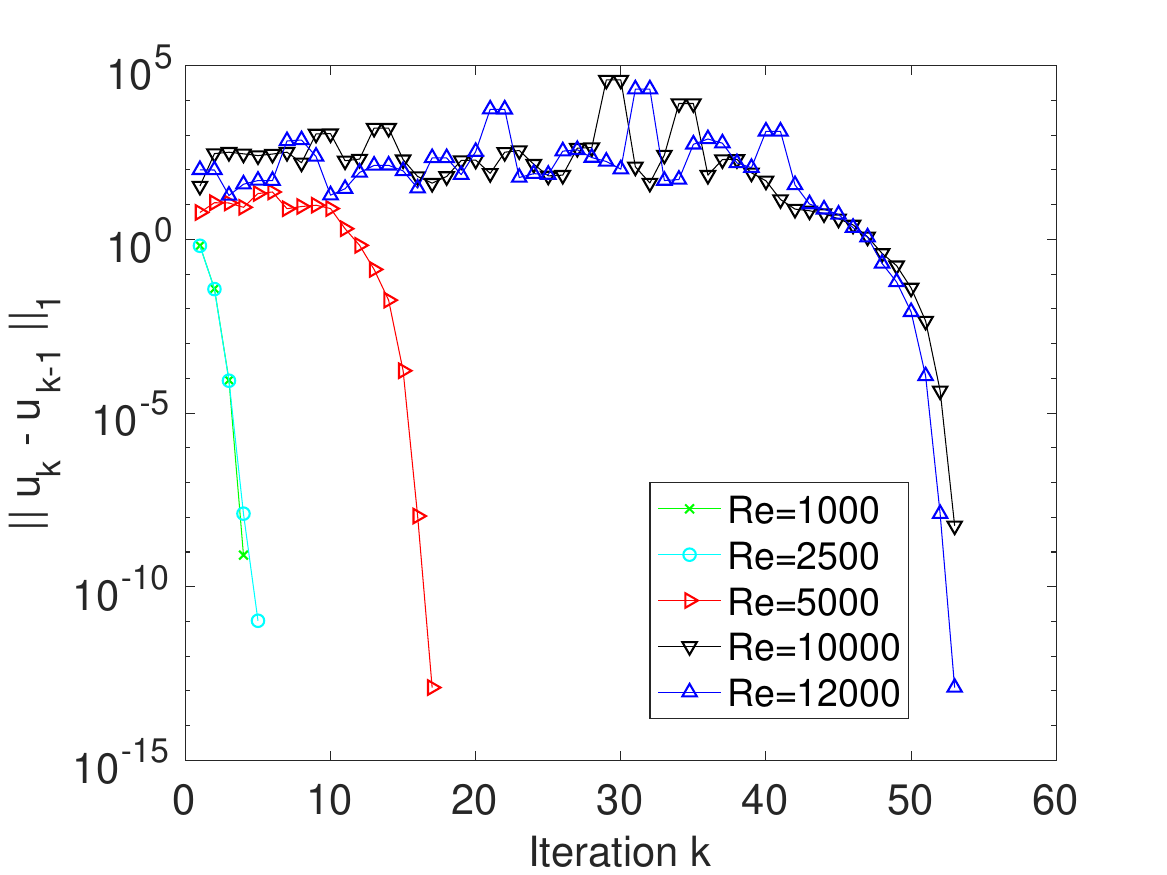}
\includegraphics[width = .4\textwidth, height=.35\textwidth,viewport=0 0 530 400, clip]{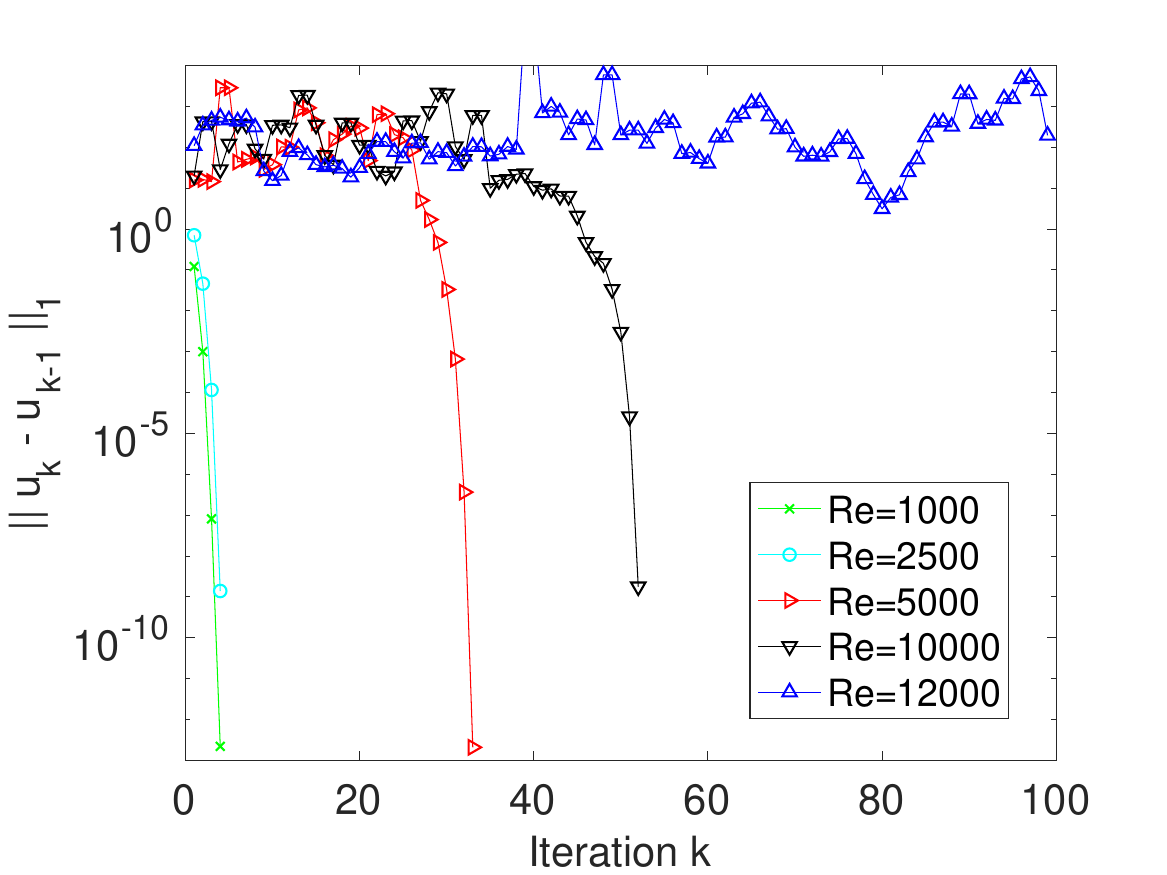}
\caption{\label{conv1} Shown above are convergence plots for the Picard-Newton iteration for varying $Re$, for meshes using  $h=\frac{1}{32}$, $h=\frac{1}{64}$, $h=\frac{1}{128}$ and $h=\frac{1}{196}$ before barycenter refinement.
}
\end{figure}

For comparison, in figure \ref{conv2} we show convergence plots for Picard, Newton and Newton with Line Search (NLS) for varying $Re$ on the barycenter refined $h=\frac{1}{64}$ mesh.  Here we observe that Picard will converge for roughly $Re\le 4000$, Newton will converge for roughly $Re\le 2500$  and Newton with line search will converge for roughly $Re\le 3000$ (the line search implementation checks for a decrease in the nonlinear NSE finite element residual vector for a given step size, and if no decrease is found then the step size is cut in half, to a minimum of $\frac{1}{32}$ step size).  Hence it is clear the proposed Picard-Newton method is able to converge for much higher Reynolds number than Picard, Newton, or Newton with line search are able to.

\begin{figure}[h!]
\center
\hspace{.5in} Picard \hspace{2in} Newton \\
\includegraphics[width = .4\textwidth, height=.35\textwidth,viewport=0 0 530 400, clip]{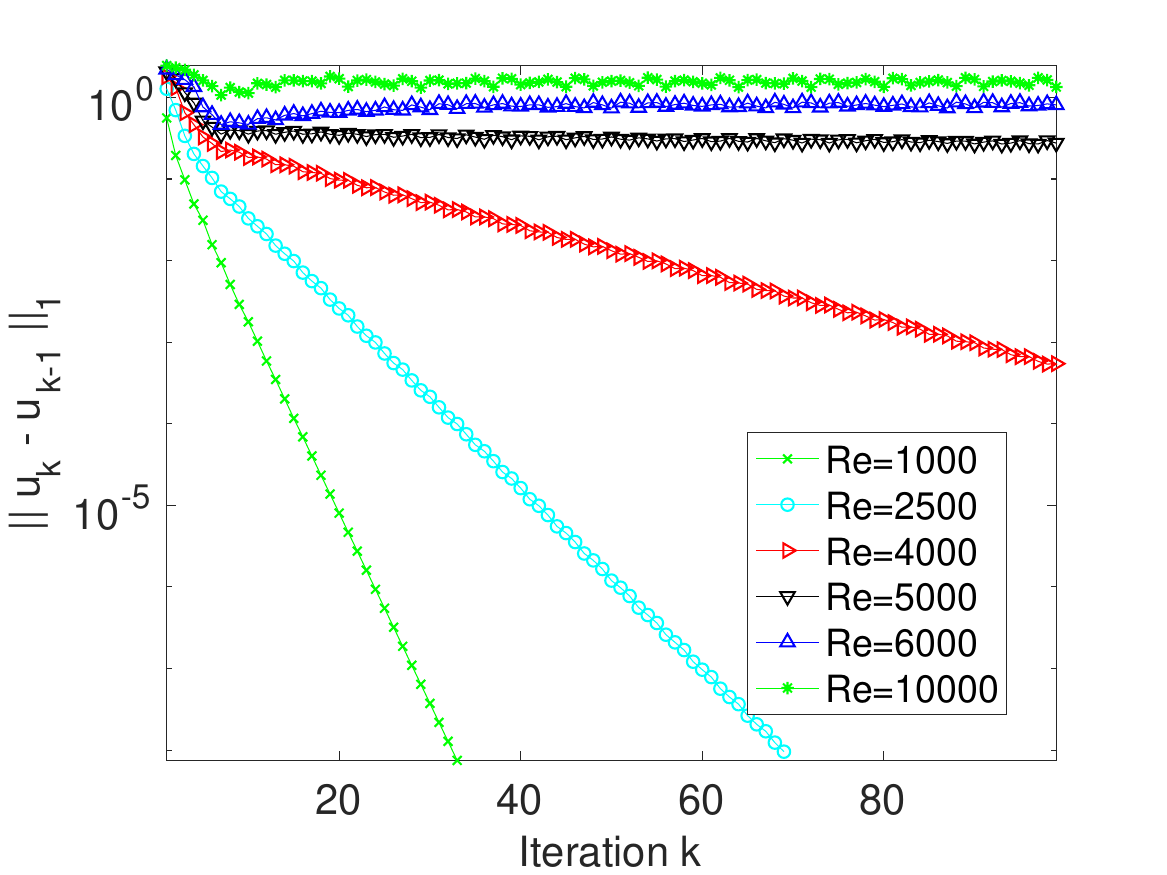}
\includegraphics[width = .4\textwidth, height=.35\textwidth,viewport=0 0 530 400, clip]{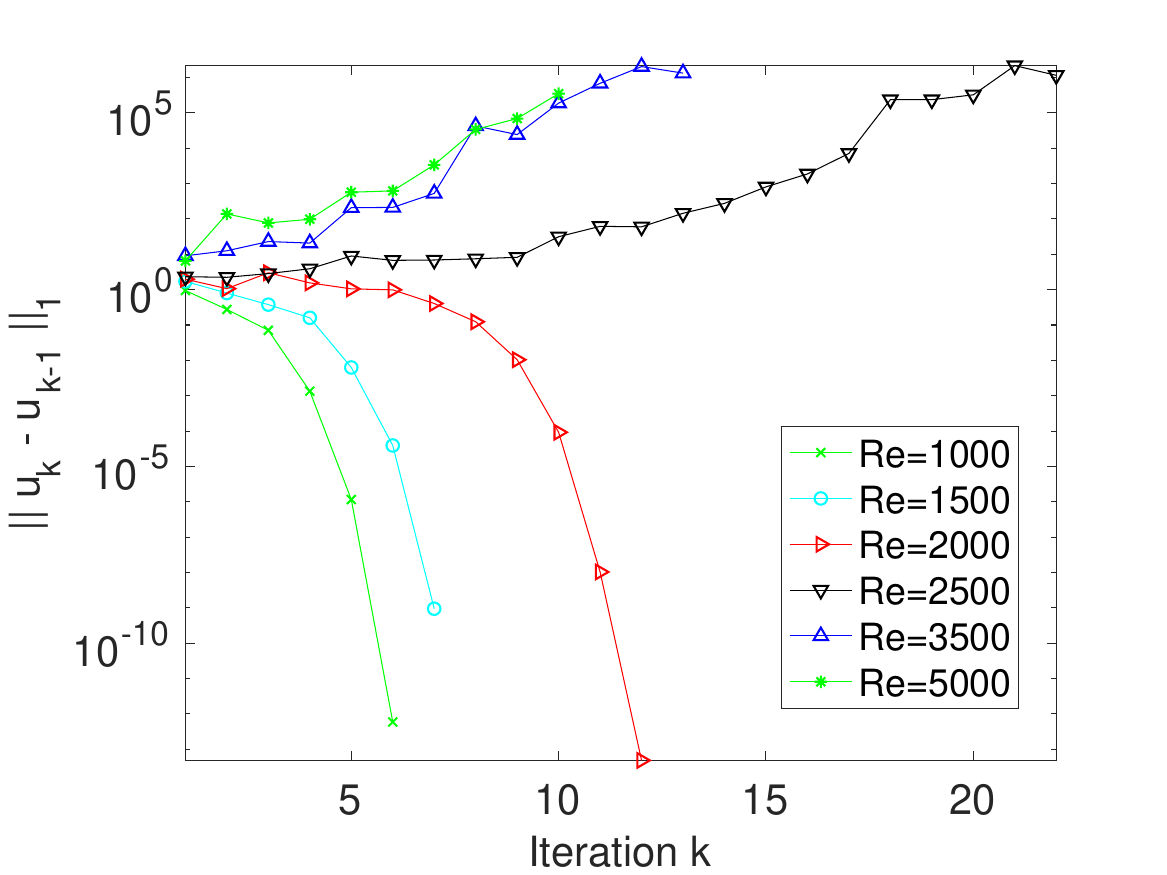}\\
Newton line search\\
\includegraphics[width = .4\textwidth, height=.35\textwidth,viewport=0 0 530 400, clip]{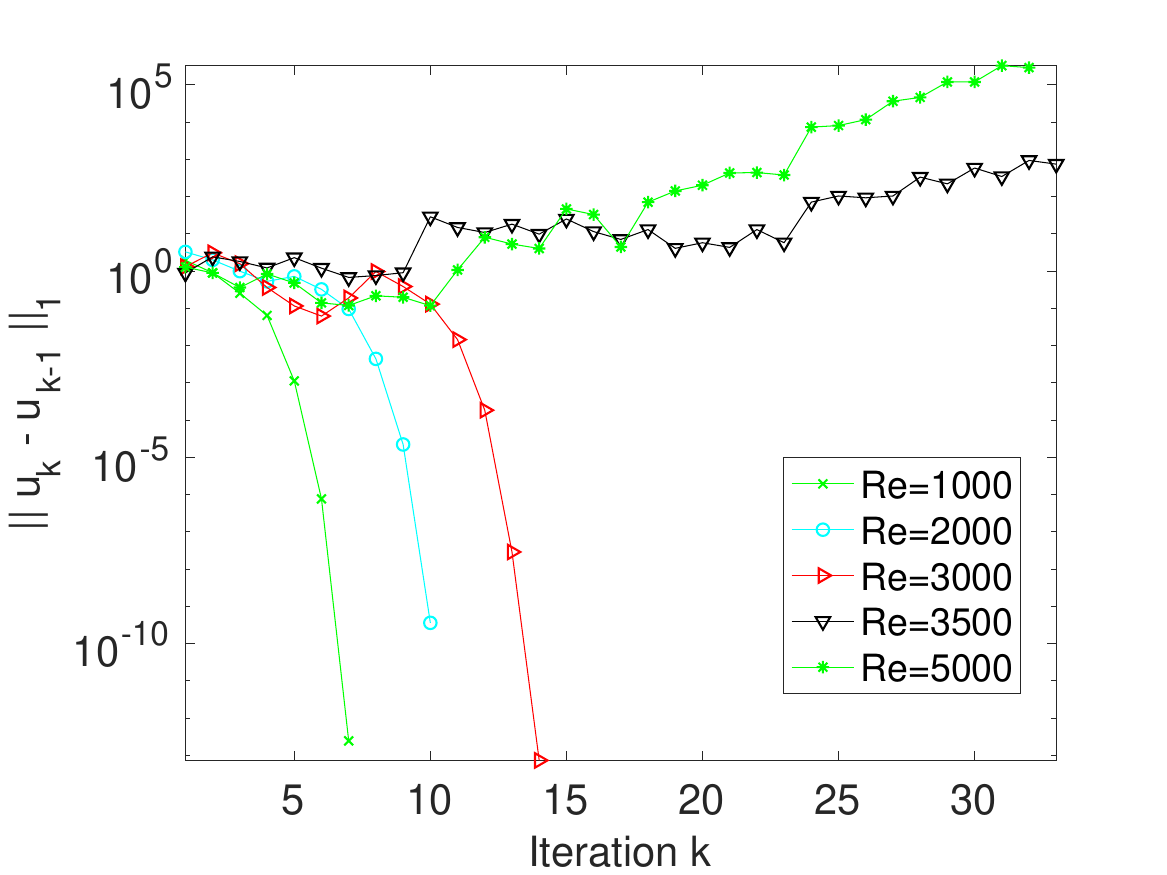}
\caption{\label{conv2} Shown above are convergence plots for the Picard (top left), Newton (top right) and Line Search Newton (bottom center), all using varying Re, but all using a barycenter refined $h=\frac{1}{64}$ mesh.}
\end{figure}

\subsubsection{3D driven cavity}

Our next test is for the 3D lid-driven cavity benchmark problem, which is a 3D analogue of the 2D cavity test problem above.  The domain is the unit cube, there is no forcing ($f=0$), homogeneous Dirichlet boundary conditions are enforced on the walls except that $u=\langle 1,0,0\rangle$ at the top of the box to represent the moving lid.  The viscosity is chosen as the inverse of the Reynolds number, and we will use several choices of $Re$.

\begin{figure}[ht]
\center
\includegraphics[width = .32\textwidth, height=.3\textwidth,viewport=0 0 550 415, clip]{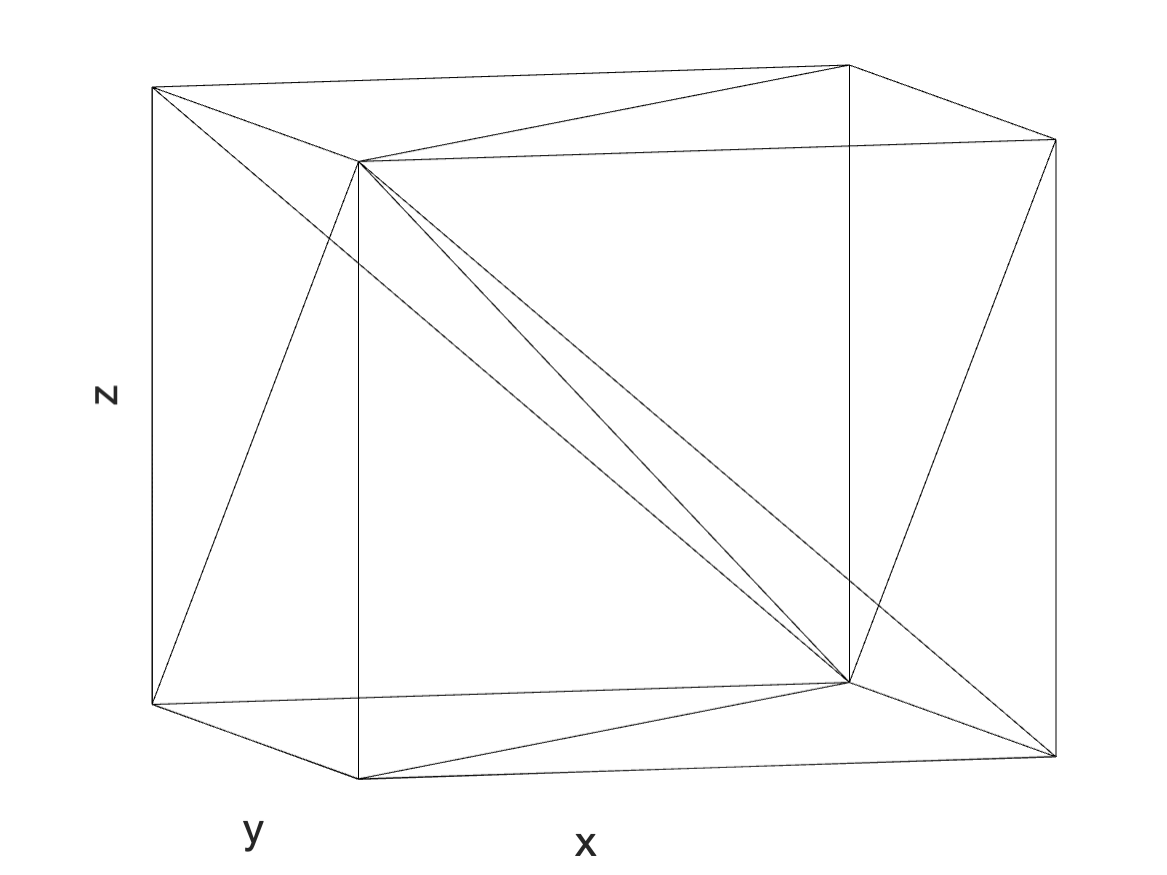}
\caption{\label{tet} Shown above is the method used to split a rectangular box into 6 tetrahedra.}
\end{figure}

We compute using two different meshes, which are constructed by first using Chebychev points on [0,1] to construct $\mathcal{M}\times \mathcal{M}\times \mathcal{M}$ grids of rectangular boxes (we use $\mathcal{M}$=8 for mesh 1, 12 for mesh 2 and $\mathcal{M}$=14 for mesh 3).  Each box is then split into 6 tetrahedra via the splitting shown in \ref{tet}, and then each of these tetrahedra is split into 4 tetrahedra with a barycenter refinement.  The mesh is then equipped with $(P_3, P_2^{disc})$ Scott-Vogelius elements, and this provides for approximately 206K total dof for mesh 1, 796K total dof for mesh 2, and 1.3 million total dof for mesh 3.  Solution plots found with this discretization matched those from the literature \cite{WongBaker2002}, and we show the midspliceplanes of the $Re$=1000 mesh 2 solution in figure \ref{fig:midsliceplanes}.

\begin{figure}[H]
			\centering
			\includegraphics[width = 0.95\textwidth, height=.27\textwidth,viewport=100 0 1100 300, clip]{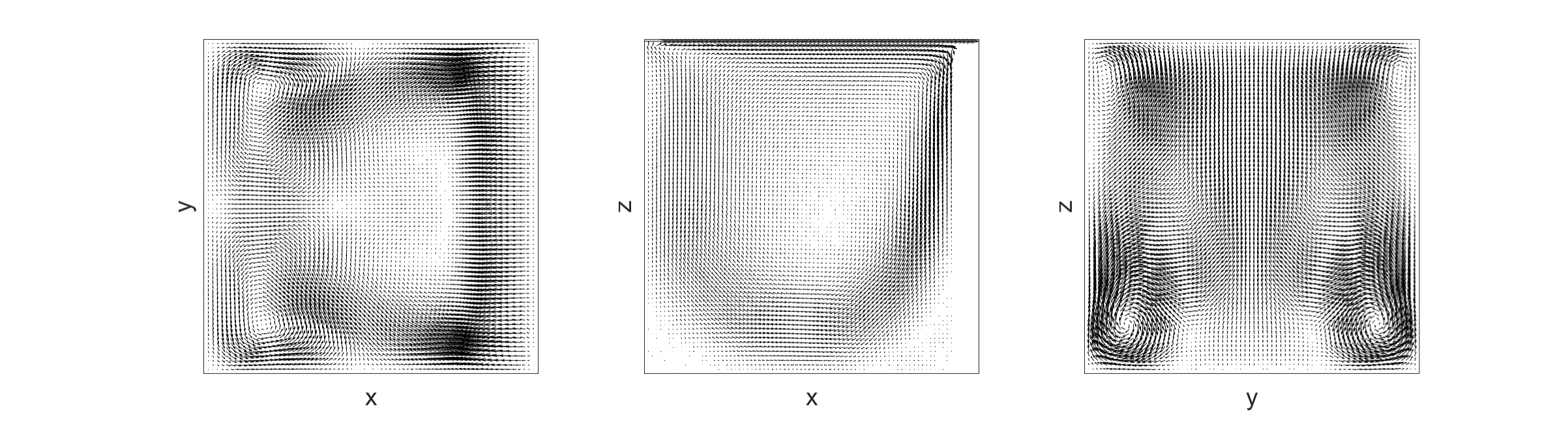}
			\caption{Shown above are midsliceplanes of the solution for the 3D driven cavity simulations at $Re$=1000.\label{fig:midsliceplanes}}
\end{figure}

We test convergence for the Newton, Picard, and Picard-Newton iterations for varying $Re$, and on all three meshes, always using $u_0=0$ for the initial iterate.  For each case, we test how many iterations it takes for the residual to drop below $10^{-8}$ in the $H^1$ norm.    
We consider failure to be lack of convergence within 200 iterations, 
and we call it blowup if the $L^2$ residual ever grows above $10^4$, 
(and when this happens our linear solvers fail).
\color{black}
Convergence results are shown in table \ref{conv10}.  As expected, Newton and Picard iterations only converge for smaller $Re$.  Picard-Newton, on the other hand, is able to converge for much larger $Re$, up to $Re=1800$ on mesh 3.  We note this is without any acceleration methods, continuation methods, line search, or other; it is simply the two step Picard-Newton iteration.  We note that the results suggest some mesh independence of the convergence: up to $Re=1400$, the number of iterates to convergence for Picard-Newton is about the same on meshes 2 and 3.  Interestingly, convergence for $Re$=1600 and 1800 is achieved on mesh 3 but not on mesh 1 or mesh 2.

{\footnotesize
\begin{table}[H]
\centering
\begin{tabular}{|c|c|c|c|c|c|c|c|c|c|c|c|c|}
	\hline
 &	& \multicolumn{3}{|c|}{Mesh 1}  &	& \multicolumn{3}{|c|}{Mesh 2} &	& \multicolumn{3}{|c|}{Mesh 3} \\ \hline
	$Re$ / method &  & Newt & Pic & P-N & &  Newt & Pic & P-N & &  Newt & Pic & P-N \\ \hline
100& &  	6& 30	& 4 &&	5   &  26 & 4  & & 5 & 26 & 4 \\ \hline
200&  & 	7& 121	& 5 &&	6   & 180 & 5 & & 6 & 185 & 5 \\ \hline
400&  & 	8& F	& 6 &&		8  &  F & 6 & & 8 & F & 6 \\ \hline
600&  & 	11& F	& 7 &&	14  &  F & 7 & & B & F & 7 \\ \hline
800&  & 	B& F	& 7 &&	B  &  F & 9 & & B & F & 9 \\ \hline
1000&  & 	B& F	& 9 &&	B  &  F & 11 & & B& F & 13 \\ \hline
1200&  & 	B& F	& 10 &&	B  &  F & 12 & & B & F & 13 \\ \hline
1400&  & 	B& F	& 10 &&	B  &  F & 34 & & B & F & 33 \\ \hline
1600&  & 	B& F	& 11 &&	B  &  F & F & & B & F & 68 \\ \hline
1800&  & 	B& F	& 11 &&	B  &  F & F & & B & F & 176 \\ \hline
2000&  & B	& F	& 13 &&	B  &  F & F & & B & F & F \\ \hline
\end{tabular}
\caption{Shown above are convergence results (number of iterations, `F' if no convergence after 200 iterations, `B' if $H^1$ residual grows above $10^4$) for the Picard, Newton and Picard-Newton iterations for varying $Re$ and on mesh 1 and mesh 2.\label{conv10}}
\end{table}
}

\section{Anderson accelerated Picard-Newton}

Anderson acceleration (AA) has recently been shown to be very effective at accelerating and enabling convergence of fixed point methods, and in particular for the Picard method for the NSE \cite{PRX19,EPRX20,PR21}.  Hence it seems appropriate to test if AA can also help improve convergence in the Picard-Newton iteration.  There are multiple ways AA could be applied: to the Picard iteration $g_P$, to the Newton iteration $g_N$, to the entire iteration $g_{PN}$, and combinations thereof.  In light of the recent work of \cite{X23,RX23} where it was shown that AA reduces the asymptotic convergence order of superlinearly converging methods including for the NSE, we focus here on applying AA only to the Picard step
(but we note that we did see modest improvement with AA applied to the $g_{PN}$; results omitted).  
For simplicity, we consider only the case of depth $m=1$ and no relaxation for these initial tests, but remark that further improvement is very likely possible with optimal depth and relaxation parameters.  Hence for the proposed AAPicard-Newton solver studied in this section, each iteration will require two Picard solves and one Newton solve at each iteration.

\subsection{AA background}
The Anderson acceleration algorithm is defined as follows.  For a fixed point function $g:X\rightarrow X$ with $X$ a Hilbert space with norm $\| \cdot \|_X$, the depth $m \ge 0$ AA algorithm with damping parameters $0 < \beta_{k+1} \le 1$ is given by:
\color{black}
\\ \ \\
Step 0: Choose $x_0\in X.$\\
Step 1: Find $w_1\in X $ such that $w_1 = g(x_0)-x_0$.  
Set $x_1 = x_0 + w_1$. \\
Step $k+1$: For $k=1,2,3,\ldots$ Set $m_k = \min\{ k, m\}.$\\
\indent [a.] Find $w_{k+1} = g(x_k)-x_k$. \\
\indent [b.] Solve the minimization problem for $\{ \alpha_{j}^{k+1}\}_{k-m_k}^k$
\begin{align}\label{eqn:opt-v0}
\min_{\sum_{j=k-m_k}^{k} \alpha_j^{k+1}  = 1} 
\left\| \sum_{j=k-m_k}^{k} \alpha_j^{k+1} w_{j+1} \right\|_X
\end{align}
\indent [c.] For a selected damping factor $0 < \beta_{k+1} \le 1$, set
\begin{align}\label{eqn:update-v0}
x_{k+1} =  \sum_{j= k-m_k}^k \alpha_j^{k+1} x_{j}
 + \beta_{k+1} \sum_{j= k-m_k}^k \alpha_j^{k+1} w_{j+1},
\end{align}
where $w_{k+1}:= g(x_k)-x_k$ represents the stage $k$ residual.  
 
 \begin{remark}
The convergence theory assumes the $\alpha_j^{k+1}$ are uniformly bounded.  As discussed in \cite{PR21,PR23}, this is equivalent to assuming full column rank of the matrix with columns $(w_{j+1}-w_j)_{j=k,k-1,...k-m}$ and can be controlled by length and angle filtering.
 \end{remark}

\color{black}

%
%
%
%

A key idea of \cite{PRX19,PR21} in understanding how AA improves linearly convergent methods was to define the gain of the optimization problem by
\begin{align}\label{thetadef}
\theta_k := \frac{ \left\| \sum_{j=k-m_k}^{k} \alpha_j^{k+1} w_{j+1} \right\|_X } { \| w_{k} \|_X }.
\end{align}
This is considered the gain factor because the numerator represents the minimum value of the sum using the optimal parameters, while the denominator is the value of the sum if  the usual fixed point method was used.  With this, it can be proven that AA improves the linear convergence rate by scaling it by the gain factor $\theta_k$ of the underlying AA optimization problem  \cite{PRX19,EPRX20,PR21} .  For AA depth $m=1$, the result from Theorem 4.1 in \cite{PR21} reads
\begin{align}\label{eqn:m1-thm}
&\nr{w_{k+1}}_X  \le
 \nr{w_k}_X \bigg\{
\theta_k \big((1-\beta_{k}) + \kappa_g \beta_{k} \big)
+ \hat \kappa_g \sigma^{-1} \sqrt{1-\theta_{k}^2}
\nonumber \\ & 
  \times\bigg( \nr{w_k}_X
  \Big(\sigma^{-1} \sqrt{1-\theta_{k}^2} + \beta_{k}\theta_{k} \Big)
  +\nr{w_{k-1}}_X
  \Big(\sigma^{-1} \sqrt{1-\theta_{k-1}^2} + \beta_{k-1}\theta_{k-1} \Big) 
  \bigg)  \bigg\},
\end{align}
where $\kappa_g$ is the linear convergence rate of the usual fixed point iteration, $\hat \kappa_g$ is the Lipschitz constant of $g'$, and $\sigma>0$ satisfies 
\begin{equation}
\| w_{k+1} - w_k \|_X \ge \sigma \| x_k - x_{k-1} \|_X. \label{sigdef}
\end{equation}
The above result assumes $\sigma>0 \ \forall k$ and $g$ is Lipschitz continuously differentiable, and we note that if $g$ is contractive then $\sigma = 1 - \kappa_g$ (\cite{PR21} discusses the case when $g$ is not contractive).  

\begin{remark}
Equation \ref{eqn:m1-thm} 
also reveals whether the mesh size $h$ affects the convergence. For the NSE, the Lipschitz constant $\kappa_g$ is independent of $h$ \cite{PRX19}. We observe very little difference in the convergent plots for various $h$ in our tests. However, for certain fluid models, such as Bingham viscoplastic model, the Lipschitz constant $\kappa_g$ has a  negative scaling with $h$, see Lemma 3.5 in \cite{LRX23}. 
\end{remark}

\subsection{Numerical Tests for AAPicard-Newton}

We now give results for three numerical tests for AAPicard-Newton, with focus on how it improves over Picard-Newton.  For the AA, we use only depth $m=1$ and no relaxation.  We use the same linear solvers as in the tests above for Picard-Newton.

\begin{alg}[AAPicard-Newton with $m=1$ and no relaxation]
\label{alg:aapn}
The AAPicard-Newton iteration with Anderson depth $m=1$ and no relaxation consists of applying the composition of the Newton and Anderson accelerated Picard iterations for solving Navier-Stokes equations: $ g_N \circ g_{AP} $, i.e.,
\begin{enumerate}
\item[Step 1:] Find $\tilde u_{k+1} = g_P(u_{k})$ by finding $\tilde  u_{k+1}\in V$ 
satisfying 
\begin{equation}
\nu(\nabla \tilde u_{k+1},\nabla v) + b^*(u_k,\tilde u_{k+1},v) = (f,v) \ \forall v\in V. \label{wpaa}
\end{equation}
\item[Step 2:] Find $\tilde {\tilde u}_{k+1} = g_P(\tilde u_{k+1})$ by finding $\tilde  {\tilde u}_{k+1}\in V$ 
satisfying 
\begin{equation}
\nu(\nabla \tilde {\tilde u}_{k+1},\nabla v) + b^*(\tilde u_{k+1}, \tilde {\tilde u}_{k+1},v) = (f,v) \ \forall v\in V. \label{wpaa}
\end{equation}
\item[Step 3:] 
Determine $\alpha^{k+1}$ by
\[
\alpha^{k+1} = \frac{ (\nabla \tilde{\tilde u}_{k+1},\nabla (\tilde {\tilde u}_{k+1} - \tilde u_{k+1}) )}{\| \nabla  (\tilde {\tilde u}_{k+1} - \tilde u_{k+1}) \|^2}
\]
and set
\[
\hat u_{k+1} = (1-\alpha^{k+1})  \tilde {\tilde u}_{k+1} + \alpha^{k+1} \tilde u_{k+1}.
\]
\item[Step 4:] Find $u_{k+1} = g_N(\hat u_{k+1})$  by finding $u_{k+1}\in V$ satisfying  \eqref{wn}.
\end{enumerate}
\end{alg}

\color{black}

\subsubsection{2D Driven Cavity}
%
We now test AAPicard-Newton on the 2D driven cavity problem (same setup as above) with Re=10000, 12000, 15000, 20000, and 25000, first with a barycenter refined $h=\frac{1}{128}$ uniform triangulation and also a barycenter refined $h=\frac{1}{196}$ uniform triangulation.  Plots of solutions for the $Re$=10000, 15000, 20000 and 25000 that were found on the $h=\frac{1}{196}$ mesh are shown in figure \ref{nseplot2}.  These plots match those found in \cite{erturk} quite well, although 25000 is beyond what is reported in their paper.  We note that no relaxation or continuation methods are used to get these results, and $u_0=0$ is always taken as the initial iterate.

\begin{figure}[ht]
\center
Re=10000 \hspace{1.3in} Re=15000 \hspace{1.3in} Re=20000 \\ 
\includegraphics[width = .3\textwidth, height=.3\textwidth,viewport=115 45 460 390, clip]{Cavity10000.pdf}
\includegraphics[width = .3\textwidth, height=.3\textwidth,viewport=115 45 460 390, clip]{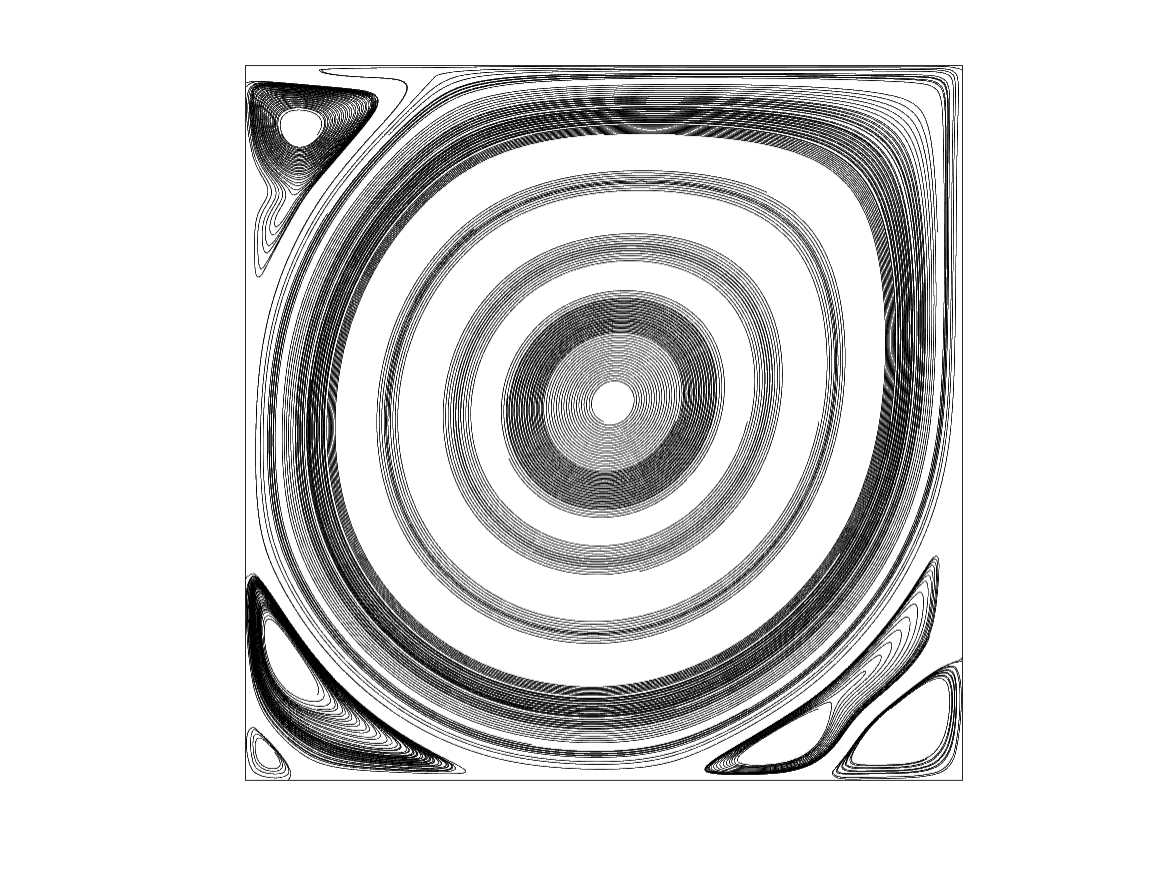}
\includegraphics[width = .3\textwidth, height=.3\textwidth,viewport=115 45 460 390, clip]{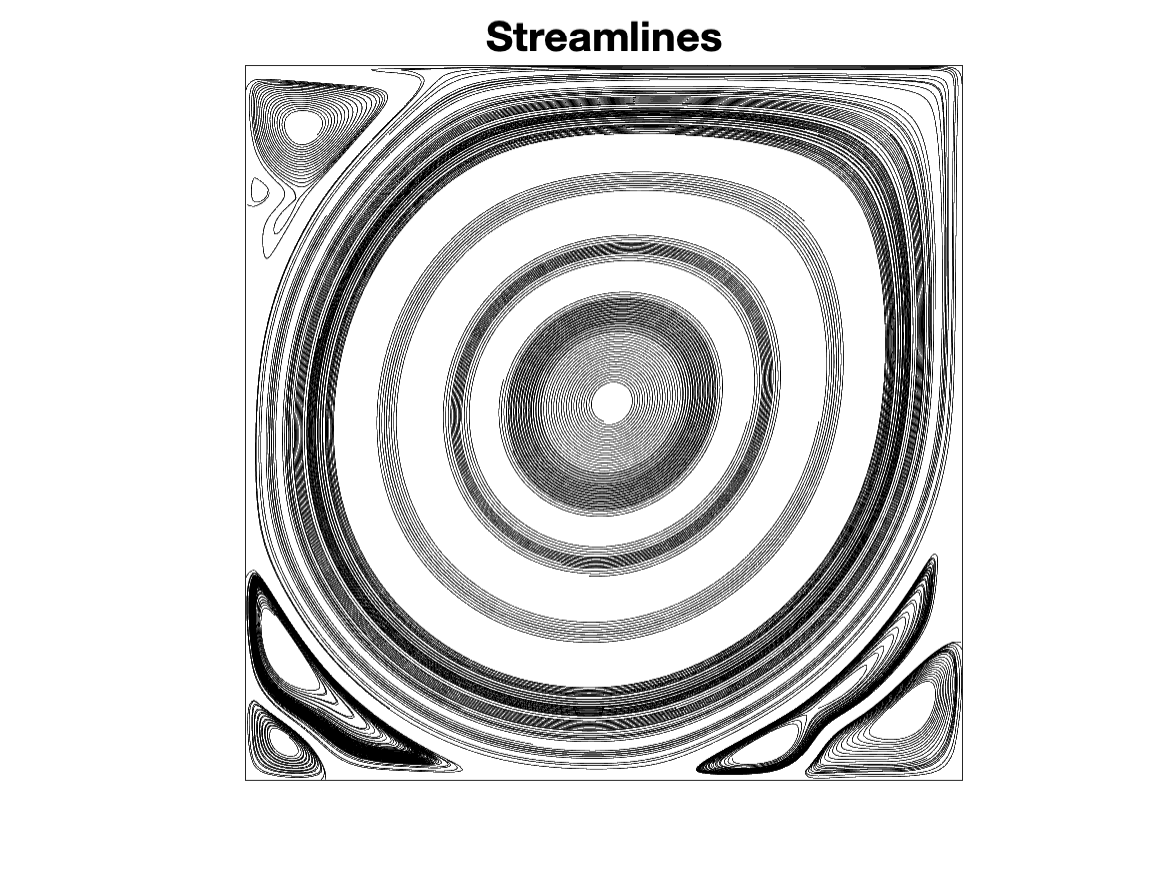}
\caption{\label{nseplot2} Shown above are streamlines of velocity solutions found for the 2D driven cavity problems with varying $Re$.}
\end{figure}

\begin{figure}[ht]
\center
\includegraphics[width = .4\textwidth, height=.35\textwidth,viewport=0 0 530 400, clip]{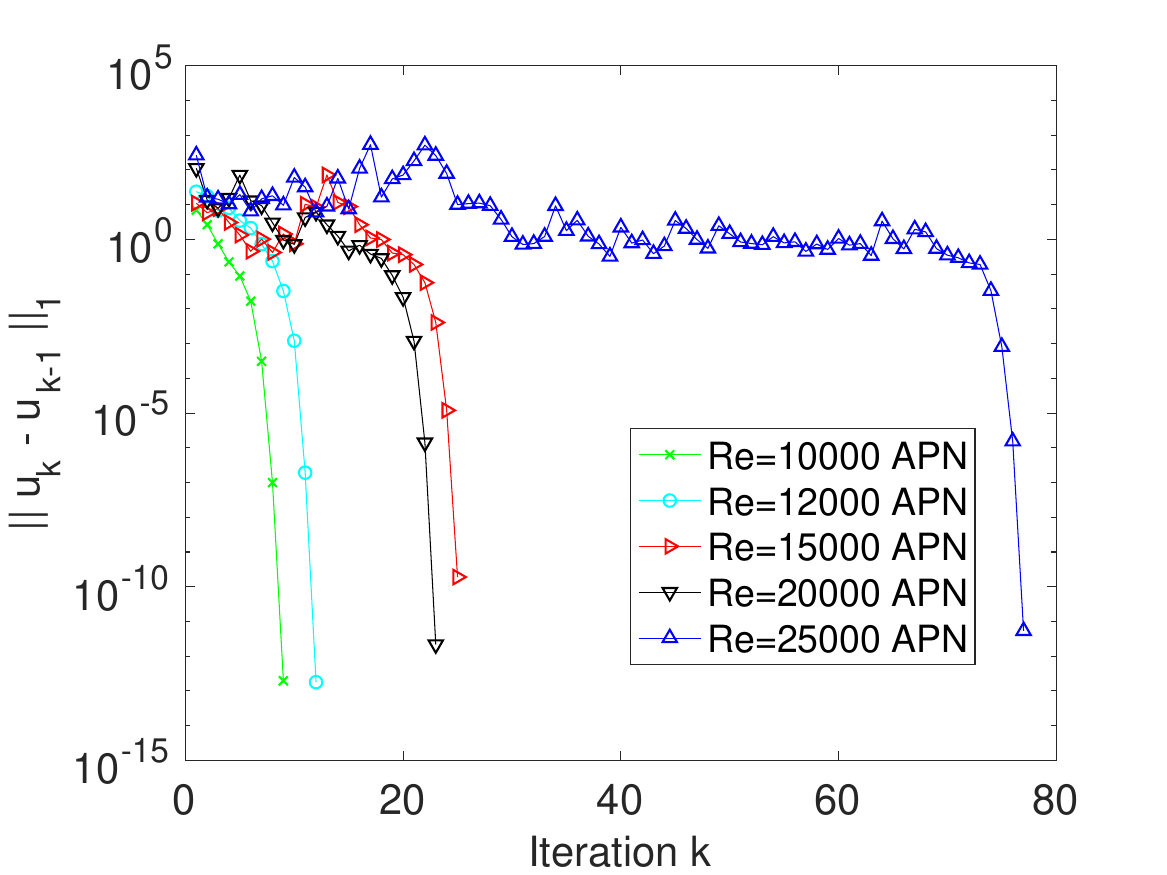}
\includegraphics[width = .4\textwidth, height=.35\textwidth,viewport=0 0 530 400, clip]{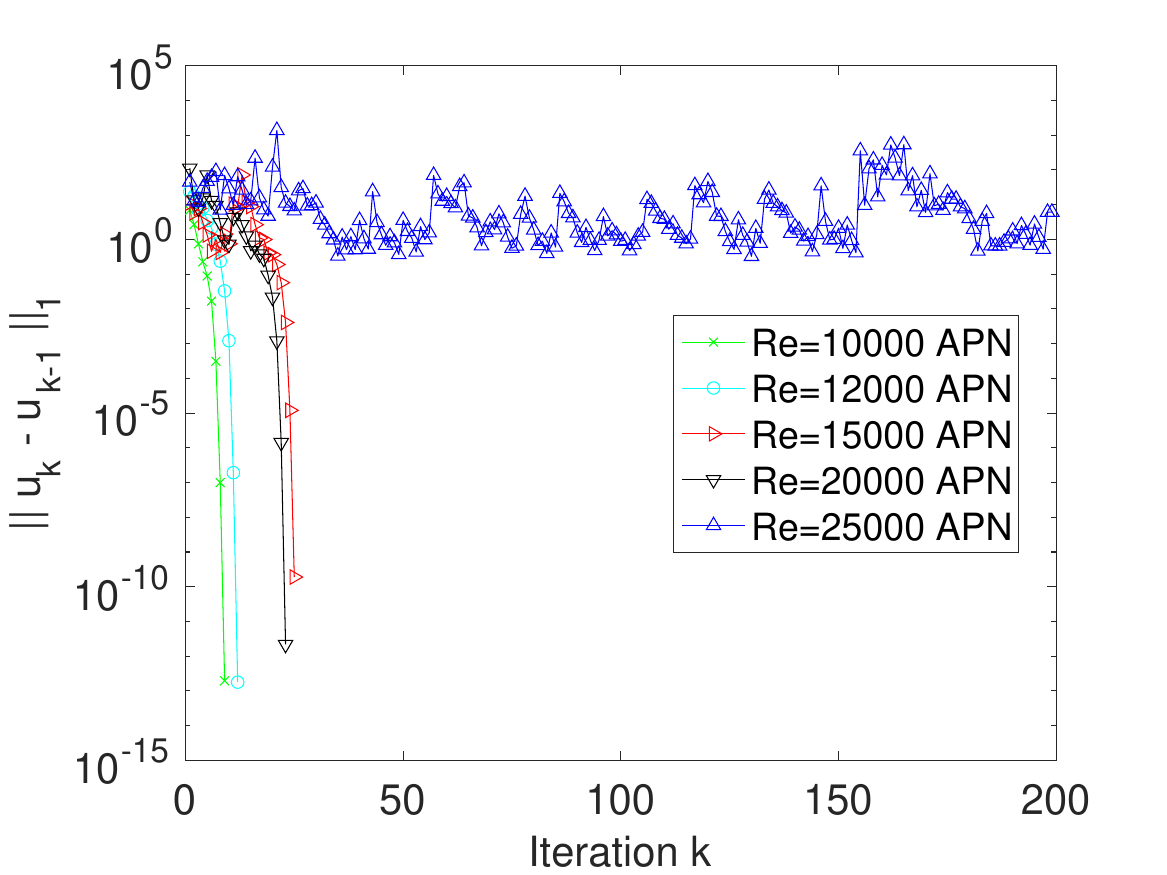}
\caption{\label{convAPN} Shown above are convergence plots for AAPicard-Newton on a barycenter refined $\frac{1}{128}$ (left) and  $\frac{1}{196}$ (right) mesh, for varying Re. 
}
\end{figure}

Convergence results for AAPicard-Newton are shown in figure \ref{convAPN}, and are quite good.  Convergence is achieved in all tests, and we note that there is little change between the convergence plots on the two meshes, suggesting that AAPicard is sufficiently preconditioning Newton so that convergence may be nearly mesh independent. Computing these solutions is well known to be quite difficult, as all are steady solutions at Reynolds numbers believed beyond the first bifurcation point where a natural flow would be periodic in time.  Many papers exist that compute steady solutions at higher Reynolds numbers (see e.g. \cite{BDD19,erturk} and references therein), however seemingly none of methods directly solve the steady problem on a single mesh as we do here and none are able to achieve convergence for such high $Re$; other successful methods use time-stepping (or pseudo time-stepping) and try to drive the problem to a steady state, and/or continuation methods and/or various multigrid approaches.

\subsubsection{Channel flow past a block}

The next experiment tests the proposed methods on channel flow past a block. Many studies can be found on this test, e.g. \cite{TGO15, R97, SDN99}. The domain consists of a $2.2\times0.41$ rectangular channel, and a block having a side length of $0.1$ with center at $(0.2, 0.2)$, with the origin set to be the bottom left corner of the rectangle, see figure \ref{fig:blockdomain}.
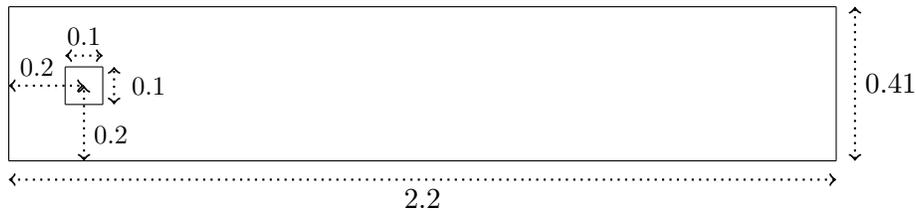
\begin{figure}[H]
	\center
	\begin{tikzpicture}[scale=5]
		\draw (0,0) -- (2.2,0);
		\draw [dotted, <->, thick] (0,-0.05) -- (2.2,-0.05);
		\draw (1.1,-0.05) node[below]{$2.2$};
		\draw (2.2,0) -- (2.2,0.41);
		\draw [dotted, <->, thick] (2.25,0) -- (2.25,0.41);
		\draw (2.25,0.205) node[right]{$0.41$};
		\draw (0,0.41) -- (2.2,0.41);
		\draw (0,0) -- (0,0.41);
		\draw[dotted, <->, thick] (0.2,0)--(0.2,0.2);
		\draw (0.15,0.15) -- (0.25,0.15);
		\draw (0.25,0.15) -- (0.25,0.25);
		\draw (0.15,0.25) -- (0.25,0.25);
		\draw (0.15,0.15) -- (0.15,0.25);
		\draw [dotted, <->, thick] (0,0.2)--(0.2,0.2);
		\draw [dotted, <->, thick]  (0.28,0.15)--(0.28,0.25);
		\draw [dotted, <->, thick]  (0.15,0.28)--(0.25,0.28);
		\draw (0.3,0.2)node[right]{\small$0.1$};
		\draw (0.2,0.28)node[above]{\small$0.1$};
		\draw (0.075,0.20)node[above]{\small$0.2$};
		\draw (0.2,0.07)node[right]{\small$0.2$};
	\end{tikzpicture}
	\caption{Shown above is the domain for the channel flow past a block numerical experiment \label{fig:blockdomain}}
\end{figure}

\begin{figure}[ht]
\center
\includegraphics[width = .75\textwidth, height=.15\textwidth,viewport=100 160 490 240, clip]{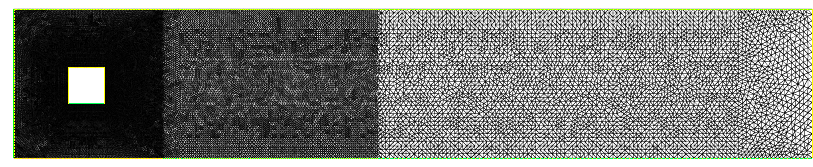}\\
\ \ \ \includegraphics[width = .75\textwidth, height=.14\textwidth,viewport=190 30 1400 250, clip]{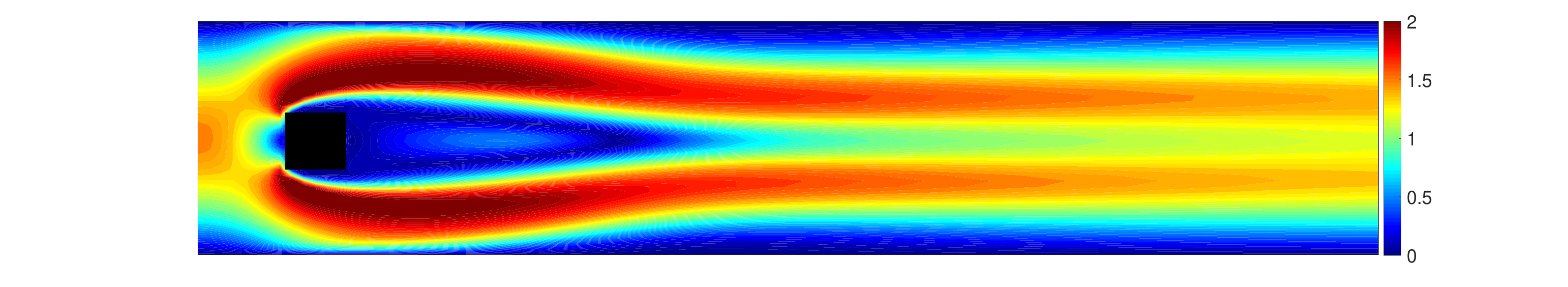}\\
\ \ \ \includegraphics[width = .75\textwidth, height=.14\textwidth,viewport=190 30 1400 250, clip]{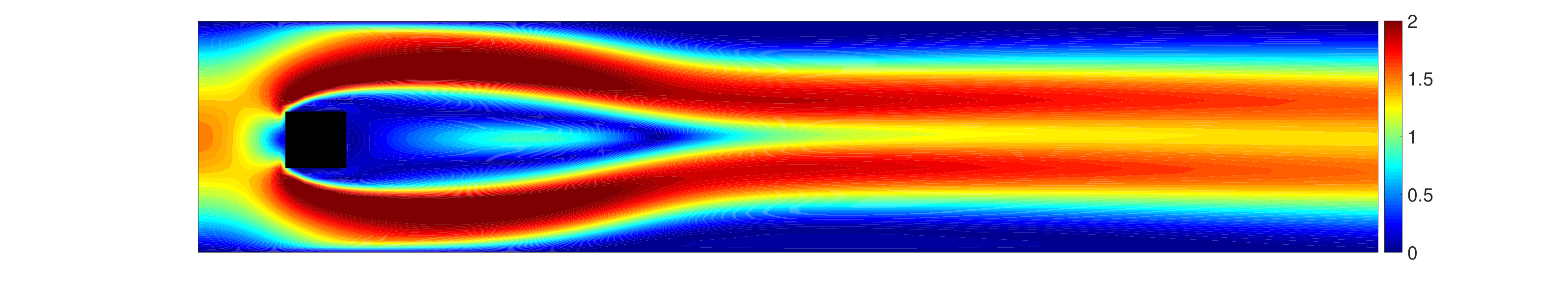}
\caption{\label{cylmesh} Shown above is the mesh used for the 2D flow past a block, before the barycenter refinement.}
\end{figure}

No-slip velocity boundary is imposed on the walls and block, and at the inflow the profile is set as 
\begin{align}
	u_1(0,y,t)=&u_1(2.2,y,t)=\frac{6}{0.41^2}y(0.41-y),\nonumber\\
	u_2(0,y,t)=&u_2(2.2,y,t)=0.
\end{align}
The outflow uses a zero traction (i.e. do nothing) homogeneous Neumann boundary condition.  This problem uses no external forcing, $f=0$. 
We compute with Reynolds numbers $Re=$100 and 150, which translate to $\nu=\frac{1}{1000}$ and $\frac{1}{1500}$ since the length scale defining the Reynolds number is taken to be the width of the block (L=0.1) for this problem.  We note that many numerical tests have been run using $Re=100$ and above for this test problem \cite{TGO15, R97, SDN99} and it is well known that this problem admits periodic in time solutions in this regime.  Still, we search for steady solutions at these Reynolds numbers.

We compute using a mesh refined towards the left half of the channel and then refined again around the block, and finally refined again with a barycenter refinement over the entire mesh.  The mesh before the barycenter refinement is applied is shown in figure \ref{cylmesh}.  We compute using $(P_2,P_1^{disc})$ Scott-Vogelius elements, which provides 832K velocity dof and 622 pressure dof.  

The Picard, Newton, Picard-Newton and AAPicard-Newton solvers are tested for both $Re=100$ and $Re=150$, and results are shown in figure \ref{cylconv}.  We observe that Newton fails in both cases, and in fact it essentially blows up (residual grows above $10^5$, at which point the simulation terminates).  Picard convergences linearly for $Re=100$ but fails for $Re=150$.  Picard-Newton converges quickly (and quadratically) in both cases, and AAPicard-Newton provides additional speedup.

\begin{figure}[ht]
\center
\includegraphics[width = .45\textwidth, height=.35\textwidth,viewport=0 0 550 400, clip]{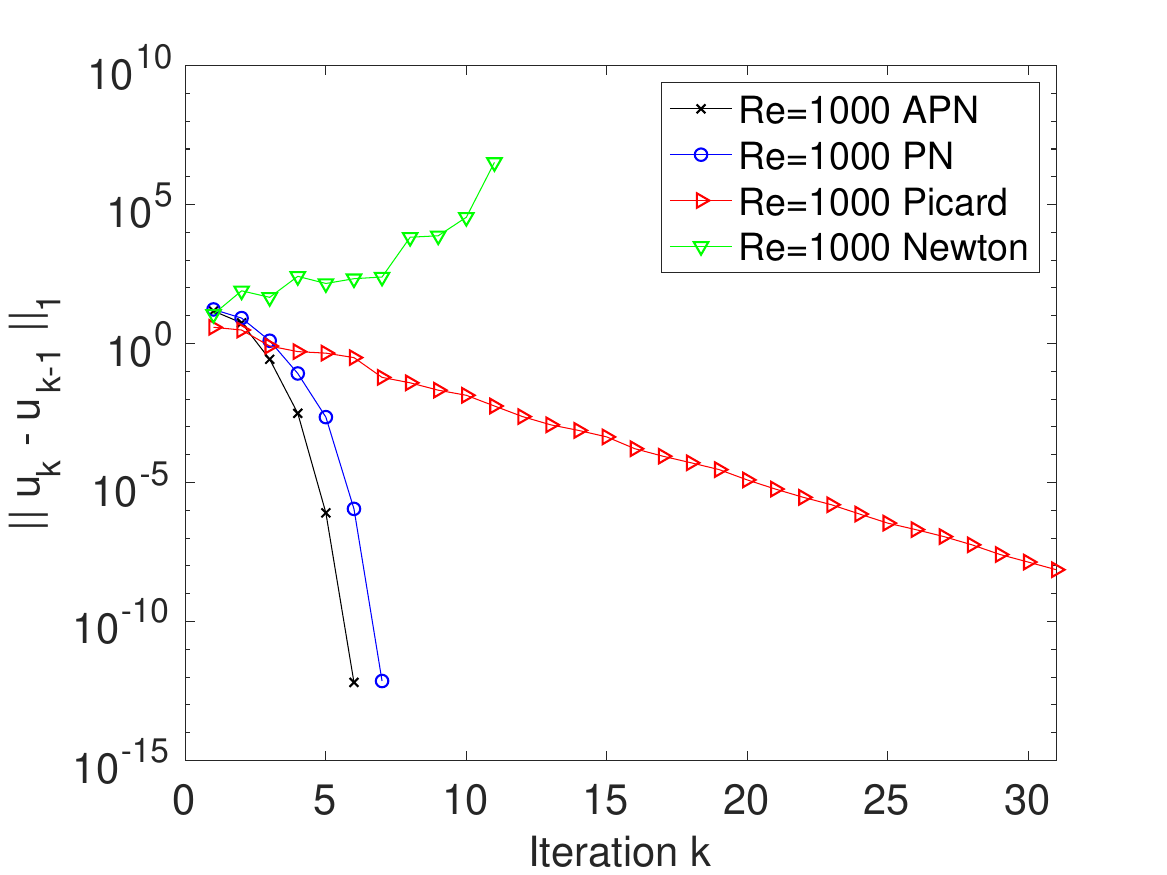}
\includegraphics[width = .45\textwidth, height=.35\textwidth,viewport=0 0 550 400, clip]{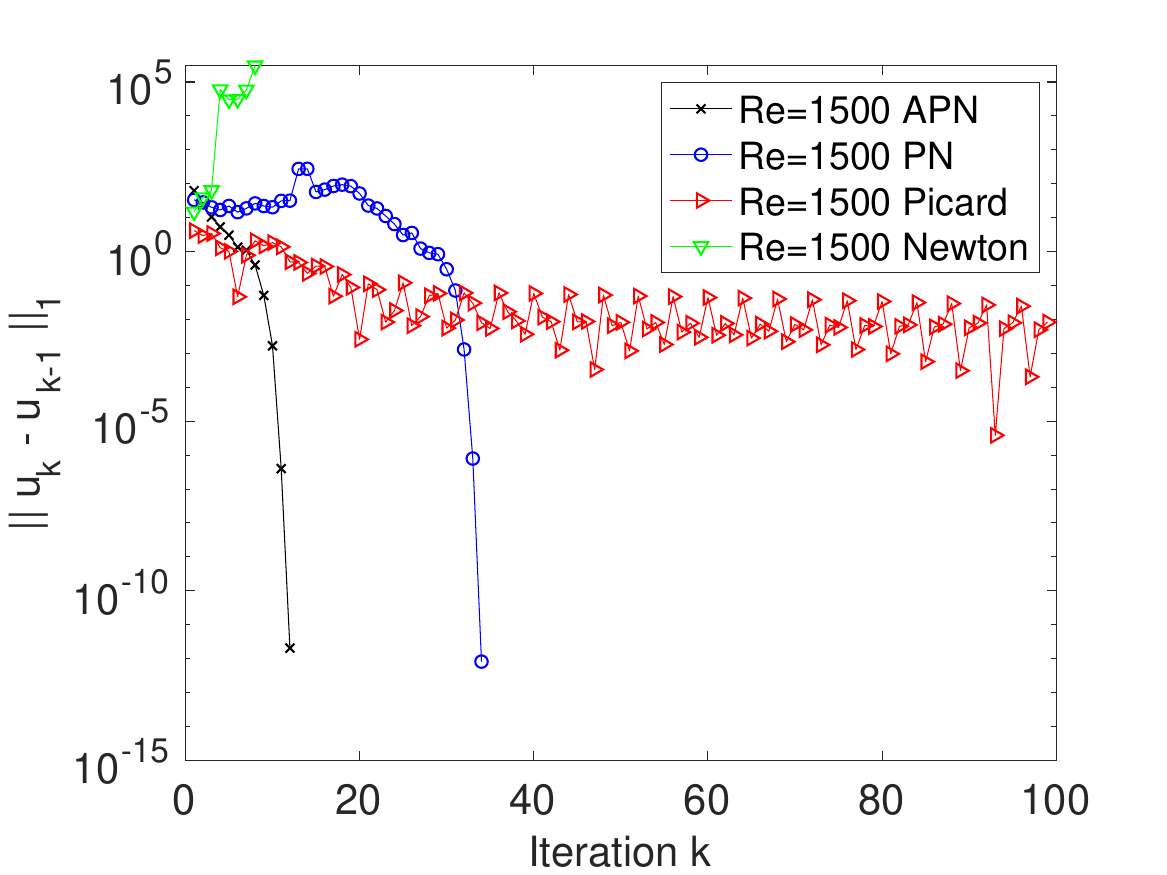}
\caption{\label{cylconv} Shown above is the convergence for Re=100 and Re=150 for channel flow past a block, for various solvers.}
\end{figure}

\subsubsection{3D driven cavity}

For our last test, we again use the 3D driven cavity benchmark problem.  We use the same problem setup as above, but now test AAPicard-Newton.  Results are in table \ref{conv11}, and we provide results from Picard-Newton (also shown above in table \ref{conv10}) for comparison.  Here we observe mixed results from using AA: it reduced the total number of iterations, and significantly, for $Re\le$ 1400, but is unable to converge for higher $Re$ than Picard-Newton does.  In fact, Picard-Newton converges at $Re$=1600 and 1800 on mesh 3 (but not meshes 1 or 2) while AAPicard-Newton does not converge on either mesh for these $Re$.  

{\footnotesize
\begin{table}[H]
\centering
\begin{tabular}{|c|c|c|c|c|c|c|c|c|c|c|}
	\hline
 &	& \multicolumn{2}{|c|}{Mesh 1}  &	& \multicolumn{2}{|c|}{Mesh 2} &	& \multicolumn{2}{|c|}{Mesh 3} \\ \hline
	$Re$ / method &  & P-N  & AAP-N & &  P-N & AAP-N & &  P-N & AAP-N \\ \hline
800&  &  7 & 6 &&  9 & 7 & & 9 & 7 \\ \hline
1000& & 9 & 6 && 11 & 8 & & 13 & 8 \\ \hline
1200& & 10 & 7 && 12 & 10 & & 13 & 10 \\ \hline
1400& & 10 & 7 && 34 & 19 & & 33 & 15 \\ \hline
1600& & 11 & 8 && F & F & & 68 & F \\ \hline
1800& & 11 & 8 && F & F & & 176 & F \\ \hline
\end{tabular}
\caption{Shown above are convergence results (number of iterations, `F' if no convergence after 200 iterations, `B' if $H^1$ residual grows above $10^4$) for the  Picard-Newton and AAPicard-Newton iterations for varying $Re$ and on meshes 1,2, and 3.\label{conv11}}
\end{table}
}

\section{Conclusions and Future Directions}

We have analyzed and tested the Picard-Newton iteration for the incompressible Navier-Stokes.  We have proven that this simple to implement method is locally quadratically convergent for any problem data, and has stability properties not shared by the Newton iteration: it is globally stable for problem data satisfying the uniqueness condition, and is stable at each half step for general data.  Numerical tests show it is very effective on several benchmark tests, exhibiting quadratic convergence for worse initial iterates and for much higher Reynolds number than usual Newton or Picard.  Moreover, even when Picard-Newton did not converge, it remained stable (as opposed to Newton, which typically blows up when it does not converge).  We also considered Picard-Newton with the Picard step enhanced with Anderson acceleration in its simplest case (depth 1 and no relaxation).   Numerical results for AAPicard-Newton showed improvement over Picard-Newton, and showed a remarkable ability to converge for much higher $Re$ than typical solvers.

Future directions include further study of Anderson acceleration applied to Picard-Newton (both application of AA to only the Picard step and to the Picard-Newton iteration itself), as well as extending the ideas to other nonlinear PDEs.

\color{black}

\section{Acknowledgements}

This material is based upon work supported by the National Science Foundation under Grant No. DMS-1929284 while the authors were in residence at the Institute for Computational and Experimental Research in Mathematics in Providence, RI, during the Acceleration and Extrapolation Methods (SP, LR and XT), and the Numerical PDEs: Analysis, Algorithms and Data Challenges (SP and LR), programs.

\section{Declarations}

{\bf Funding}  Author SP acknowledges support from the National Science Foundation from grant DMS-2011519.  Author LR acknowledges support from the National Science Foundation from grant DMS-2011490.
   \\ \ \\
{\bf Conflict of interest} The authors have no conflicts of interest to declare that are relevant to the content of this article. The authors have no relevant financial or non-financial interests to disclose.\\ \ \\
{\bf Code Availability} The code for the current study is available from the corresponding author on reasonable request.\\ \ \\
{\bf Availability of Data and Materials} The datasets generated during and/or analyzed during the current study are available from the corresponding author on reasonable request.

\bibliographystyle{plain}

\end{document}